\DeclarePairedDelimiterX\set[1]\lbrace\rbrace{#1}
\newtheorem{theorem}{Theorem}[section]
\newtheorem{corollary}[theorem]{Corollary}
\newtheorem{proposition}[theorem]{Proposition}
\theoremstyle{definition}
\newtheorem{definition}[theorem]{Definition}
\newtheorem{example}[theorem]{Example}
\theoremstyle{remark}
\let\phi=\varphi
\let\oldbigwedge\bigwedge
\def\BIGwedge{{\textstyle\oldbigwedge}}
\def\medwedge{{\scriptstyle\oldbigwedge}}
\def\bigwedge{\mathchoice{\BIGwedge}{\BIGwedge}{\medwedge}{}}
\let\epsilon=\varepsilon
\let\c@equation\c@theorem
\numberwithin{equation}{section}
\begin{document}
\title{Two-layered numbers}
    
\author{\bfseries H. Behzadipour}
    
\address{School of Electrical and Computer Engineering\\ University College of Engineering\\ University of Tehran\\ Tehran\\ Iran}
    
\email{hussein.behzadipour@gmail.com, h.behzadi@ut.ac.ir}

\subjclass[2010]{11R04.}
    
\keywords{two-layered numbers, weak two-layered numbers, perfect numbers, Zumkeller numbers}

\begin{abstract}
In this paper, first, I introduce two-layered numbers. Two-layered numbers are positive integers that their positive divisors except $1$ can be partitioned into two disjoint subsets. Similarly, I defined a half-layered number as a positive integer $n$ that its proper positive divisors excluding $1$ can be partitioned into two disjoint subsets. I also investigate the properties of two-layered and half-layered numbers and their relation with practical numbers and Zumkeller numbers.

\end{abstract}
    
\maketitle
    
\section{Introduction}

A perfect number is a positive integer $n$ that equals the sum of its proper positive divisors. Generalizing the concept of perfect numbers, Zumkeller in \cite{zumkeller} published a sequence of integers that their divisors can be partitioned into two disjoint subsets with equal sum. Clark et al. in \cite{clark} called such integers Zumkeller numbers and investigated some of their properties, and also suggested some conjectures about them. Peng and Bhaskara Rao in \cite{rao} introduced half-Zumkeller numbers and provided interesting results about Zumkeller numbers.
  
  In the present paper, I define two-layered numbers based on the concept of perfect numbers and Zumkeller numbers.  A two-layered number is a positive integer $n$ that its positive divisors excluding $1$ can be partitioned into two disjoint subsets of an equal sum. A partition $\set {A, B}$ of the set of positive divisors of $n$ except $1$ is a two-layered partition if each of $A$ and $B$ has the same sum.
  
  In the first section, I investigate the properties of two-layered numbers. For a two-layered number $n$, that sum of its divisors is $\sigma(n)$, the following statements hold (See Proposition \ref{sigmaodd}):
  
  Let $\sigma(n)$ be the sum of all positive divisors of $n$. If $n$ is a two-layered number, then
      \begin{enumerate}
      \item $\sigma(n)$ is odd.
      \item Powers of all odd prime factors of $n$ should be even.
      \item $\sigma(n) \geq 2n+1$, so $n$ is abundant.
      \end{enumerate}
      
      After that, In theorem \ref{twolayprop}, I prove that The integer $n$ is a two-layered number if and only if $\frac{\sigma(n)-1}{2}-n$ is a sum of distinct proper positive divisors of n excluding 1. I also introduce two methods of generating new two-layered numbers from known two-layered numbers. Suppose that $n$ is a two-layered number and $p$ is a prime number with $(n,p)=1$, then $np^\alpha$ is a two-layered number for any even positive integer $\alpha$ (See Theorem \ref{npalpha}). We can also generate two-layered numbers in another way. Let $n$ be a two-layered number and $p_1^{k_1}p_2^{k_2} \dots p_m^{k_m}$ be the prime factorization of $n$. Then for any nonnegative integers $\alpha_1, \dots \alpha_m$, the integer $$p_1^{k_1+\alpha_1(k_1+1)}p_2^{k_2+\alpha_2(k_2+1)} \dots p_m^{k_m+\alpha_m(k_m+1)}$$ is a two-layered number (See Theorem \ref{secondwaygenerate}).
      
      In the second section of the present paper, I generalize the concept of practical numbers and define semi-practical numbers. A practical number is a positive integer $n$ that every positive integer less than $n$ can be represented as a sum of distinct positive divisors of $n$ \cite{practical}. A positive integer $n$ is a semi-practical number if every positive integer $x$ where $1<x<n$ can be represented as a sum of distinct positive divisors of $n$ excluding $1$ (See Definition \ref{defsemipractical}). 
      
      I investigate some properties of semi-practical numbers and their relations with two-layered numbers. For example, every semi-practical number is divisible by $12$ (See Proposition \ref{divis12}). I also proved that a positive integer $n$ is is a semi-practical number if and only if every positive integer $x$ where $1<x<\sigma(n)$, is a sum of distinct positive divisors of $n$ excluding $1$ (See Theorem \ref{spracsigma}). The most important relation between semi-practical numbers and two-layered numbers is that a semi-practical number $n$ is two-layered if and only if $\sigma(n)$ is odd (See Proposition \ref{keyprop}).
      
      In section 3, I define a half-layered number.     A positive integer $n$ is said to be a half-layered number if the proper positive divisors of $n$ excluding $1$ can be partitioned into two disjoint non-empty subsets of an equal sum (See Definition \ref{halflay}). A half-layered partition
      for a half-layered number $n$ is a partition $\set {A, B}$ of the set of proper positive divisors of $n$ excluding $1$ so that
      each of $A$ and $B$ sums to the same value (See Definition \ref{defhalflayeredpartition}).
  
      After these definitions, I investigate the properties of half-layered numbers. For example, A positive integer $n$ is half-layered if and only if $\frac{\sigma(n)-n-1}{2}$ is the sum of some distinct positive proper positive divisors of $n$ (See Proposition \ref{sigma2n}). A positive even integer $n$ is half-layered if and only if $\frac{\sigma(n)-2n-1}{2}$ is the sum (possibly empty
      sum) of some distinct positive divisors of $n$ excluding $n$, $\frac{n}{2}$, and $1$ (See Theorem \ref{halflay}). If $n$ is an odd half-layered number, then at least one of the powers of prime factors of $n$ should be even (See Proposition \ref{oddhalflayered}).
      
      Using the definition of half-Zumkeller numbers, we can derive some of the interesting properties of half-layered numbers. A positive integer $n$ is said to be a half-Zumkeller number if the proper positive divisors of $n$ can be partitioned into two disjoint non-empty subsets of an equal sum. A half-Zumkeller partition for a half-Zumkeller number n is a partition $\set{A, B}$ of the set of proper positive divisors of $n$ so that each of $A$ and $B$ sums to the same value (Definition 3 in \cite{rao}). Based on these definition, I prove that if $m$ and $n$ are half-layered numbers with $(m, n)=1$, then $mn$ is half-layered (See Proposition \ref{mnhalflayered}).
      
      After that, I investigate some relations between half-layered and two-layered numbers. For example, let $n$ be even. Then $n$ is half-layered if and only if $n$ admits a two-layered partition
      such that $n$ and $\frac{n}{2}$ are in distinct subsets. Therefore, if $n$ is an even half-layered number then $n$ is two-layered (See Proposition \ref{halfzumnn2}). It is also proved that if $n$ is an even two-layered number and If $\sigma(n) < 3n$, then $n$ is half-layered (See Theorem \ref{itisalsoproved}).     Let $n$ be even. Then, $n$ is two-layered if and only if either $n$ is half-layered or $\frac{\sigma(n)-3n-1}{2}$ is a sum
      (possibly an empty sum) of some positive divisors of $n$ excluding $n$, $\frac{n}{2}$, and $1$ (See Proposition \ref{sigma3n}).
      
      If $6$ divides $n$, $n$ is two-layered, and $\sigma(n) < \frac{10n}{3}$ , then $n$ is half-layered (See Proposition \ref{6dividesn}). If $n$ is two-layered, then $2n$ is half-layered (See Proposition \ref{n2ntwolayered}). Let $n$ be an even half-layered number and $p$ be a prime with (n, p) = 1. Then $np^{\ell}$ is half-
      layered for any positive integer $\ell$ (See Proposition \ref{andprimefactorization}). Let $n$ be an even half-layered number and the prime factorization of $n$ be $ p_1^{k_1} p_2^{k_2} /dots p_m^{k_m} $ Then for nonnegative integers $\ell_1, \dots , \ell_m$, the integer
                $$ p_1^{k_1+\ell_1(k_1+1)} p_2^{k_2+\ell_2(k_2+1)} \dots p_m^{k_m+\ell_m(k_m+1)}    $$
                is half-layered (See Theorem \ref{numberandtheprime}).    

\section{two-layered numbers}
\label{section1}
	
    \begin{definition}
    A positive integer $n$ is a two-layered number if the positive divisors of $n$ excluding $1$ can be partitioned into two disjoint subsets of an equal sum.
    \end{definition}
    
    \begin{definition}
    A two-layered partition for a two-layered number $n$ is a partition $\set {A,B}$ of the set of positive divisors of $n$ excluding $1$ so that each of $A$ and $B$ sums to the same value.
    \end{definition}
    
    \begin{example}
    \label{example1}
    The number 36 is a two-layered number and its two-layered partition is $\set{A,B}$, where $A = \set{2, 3, 4, 36}$ and $B=\set {6, 9, 12, 18}$. You can check that each of $A$ and $B$ has the sum of $45$. The numbers $72, 144,$ and $200$ are also two-layered. You can find the sequence of two-layered numbers in \cite{twosequenc}.
    \end{example} 
    
    \begin{proposition}
    \label{sigmaodd}
    Let $\sigma(n)$ be the sum of all positive divisors of $n$. If $n$ is a two-layered number, then
    \begin{enumerate}
    \item $\sigma(n)$ is odd.
    \item Powers of all odd prime factors of $n$ should be even.
    \item $\sigma(n) \geq 2n+1$, so $n$ is abundant.
    \end{enumerate}
    \end{proposition}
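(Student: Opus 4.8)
The plan is to fix, once and for all, a two-layered partition $\set{A,B}$ of the divisors of $n$ other than $1$, and to record the key arithmetic fact that these divisors sum to $\sigma(n)-1$; since $A$ and $B$ have equal sums, each block must sum to exactly $\frac{\sigma(n)-1}{2}$. All three parts flow from this single observation together with the multiplicativity of $\sigma$.

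For part (1) I would argue directly: because $A$ and $B$ are sets of positive integers whose common sum equals $\frac{\sigma(n)-1}{2}$, that quantity is an integer, so $\sigma(n)-1$ is even and hence $\sigma(n)$ is odd. For part (2) I would then invoke multiplicativity. Writing $n=2^{a_0}p_1^{a_1}\cdots p_m^{a_m}$ with the $p_i$ odd, we have $\sigma(n)=\sigma(2^{a_0})\prod_{i}\sigma(p_i^{a_i})$, so $\sigma(n)$ is odd precisely when each factor is odd. The factor $\sigma(2^{a_0})=2^{a_0+1}-1$ is automatically odd and imposes no condition, whereas for an odd prime $p$ the sum $\sigma(p^a)=1+p+\cdots+p^a$ is a sum of $a+1$ odd terms and is therefore odd exactly when $a+1$ is odd, i.e. when $a$ is even. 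Combining this parity criterion with part (1) forces every odd prime factor of $n$ to occur to an even power.

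For part (3) I would use that $n$ is itself one of the divisors being partitioned, so it lies in exactly one block, say $A$. Since every element of $A$ is a positive divisor and $n\in A$, the sum of $A$ is at least $n$; but that sum equals $\frac{\sigma(n)-1}{2}$, whence $\frac{\sigma(n)-1}{2}\ge n$, i.e. $\sigma(n)\ge 2n+1$. In particular $\sigma(n)>2n$, which is exactly the definition of $n$ being abundant.

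None of the three steps presents a genuine obstacle, so the proof is essentially a matter of bookkeeping; the one place demanding mild care is the parity count in part (2), where one must separate the prime $2$ (whose exponent is left unconstrained) from the odd primes and correctly tally the $a+1$ summands appearing in $\sigma(p^{a})$.
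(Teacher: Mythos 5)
Your proposal is correct and follows essentially the same route as the paper: part (1) is the same parity observation (the paper phrases it contrapositively), and part (3) is word-for-word the paper's argument that $n$ lies in one block whose sum is at least $n$. The only cosmetic difference is in part (2), where the paper counts odd divisors of $n$ (namely $(k_1+1)\cdots(k_m+1)$ of them, which must be odd) while you track the parity of each factor $\sigma(p_i^{a_i})$ via multiplicativity --- these are the same parity computation in two guises.
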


    \begin{proof}
    $(1):$ If $\sigma(n)$ is even, then $\sigma(n)-1$ is odd, so it is impossible to partition the positive divisors of $n$ into two subset of equal sum.
    
    $(2):$ using $(1)$, the number of odd positive divisors of $n$ is odd. Suppose that the prime factorization of $n$ is $2^{k_0}p_1^{k_1}p_2^{k_2}\dots p_m^{k_m}$. The number of odd positive divisors of $n$ is $(k_1+1)(k_2+1)\dots (k_m+1)$. All of $k_i$ should be even in order to make the product $(k_1+1)(k_2+1)\dots (k_m+1)$ odd. 
    
    $(3):$ Let $n$ be a two-layered number with two-layered partition $\set {A,B}$. Without loss of generality we may assume that $n \in A$, so the sum in $A$ is at least $n$ and we can conclude $\sigma(n)-1 \geq 2n$.
    \end{proof}
    
    \begin{theorem}
    \label{twolayprop}
    The integer $n$ is a two-layered number if and only if $\frac{\sigma(n)-1}{2}-n$ is a sum of distinct proper positive divisors of n excluding 1.
    \end{theorem}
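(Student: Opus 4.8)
The plan is to exploit the observation that the divisors of $n$ other than $1$ have total sum $\sigma(n)-1$, so that a two-layered partition is precisely a way of splitting these divisors into two blocks each summing to $\frac{\sigma(n)-1}{2}$. The whole argument will then be a translation between such a partition and a single subset realizing the target value $\frac{\sigma(n)-1}{2}-n$. I would first record that, by Proposition \ref{sigmaodd}(1) and (3), the quantity $\frac{\sigma(n)-1}{2}$ is a nonnegative integer no smaller than $n$ whenever $n$ is two-layered, so that $\frac{\sigma(n)-1}{2}-n$ is a well-defined nonnegative integer (possibly $0$, realized by the empty sum).

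For the forward direction I would assume $n$ is two-layered with partition $\set{A,B}$ and, without loss of generality, place $n$ in $A$. Since each block sums to $\frac{\sigma(n)-1}{2}$, the set $A\setminus\set n$ consists of distinct divisors of $n$ that are proper (none equals $n$) and exclude $1$, and its sum is exactly $\frac{\sigma(n)-1}{2}-n$. This yields the desired representation immediately.

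For the converse I would start from a set $S$ of distinct proper divisors of $n$, all different from $1$, with $\sum_{d\in S} d = \frac{\sigma(n)-1}{2}-n$. Because the elements of $S$ are proper, $n\notin S$, so I may set $A=S\cup\set n$ and let $B$ be the complement of $A$ inside the set of divisors of $n$ other than $1$. Then $A$ sums to $\frac{\sigma(n)-1}{2}$, and since the two blocks together sum to $\sigma(n)-1$, the block $B$ sums to $(\sigma(n)-1)-\frac{\sigma(n)-1}{2}=\frac{\sigma(n)-1}{2}$ as well; hence $\set{A,B}$ is a two-layered partition and $n$ is two-layered.

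I do not expect a serious obstacle here; the only delicate points are bookkeeping ones. First, one must confirm that $\frac{\sigma(n)-1}{2}$ is an integer: in the forward direction this is guaranteed by Proposition \ref{sigmaodd}(1), while in the converse it is forced automatically, since $\frac{\sigma(n)-1}{2}-n$ is assumed to be a sum of integers and hence itself an integer. Second, one should adopt the convention that the empty sum is permitted and equals $0$, so that the boundary case $\frac{\sigma(n)-1}{2}=n$ (where $A=\set n$) is covered. With these conventions in place, the equivalence follows directly from the two constructions above.
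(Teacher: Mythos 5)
Your proposal is correct and follows essentially the same route as the paper's own proof: in the forward direction you place $n$ in one block and observe the remaining elements sum to $\frac{\sigma(n)-1}{2}-n$, and in the converse you augment the given set with $n$ and take complements, exactly as the paper does. Your additional bookkeeping (integrality of $\frac{\sigma(n)-1}{2}$ via Proposition \ref{sigmaodd}, and the empty-sum convention for the boundary case $A=\set{n}$) is sound and slightly more careful than the paper's version, but does not change the argument.
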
 
    
    \begin{proof}
    Let $n$ be a two-layered number and its two-layered partition is $\set{A,B}$. Without loss of generality we assume that $n \in A$, so the sum of the remaining elements of $A$ is $\frac{\sigma(n)-1}{2}-n$.
    
    Conversely, if we have a set of proper divisors of $n$ excluding $1$ that its sum is $\frac{\sigma(n)-1}{2}-n$, we can augment this set with $n$ to construct a set of positive divisors of $n$ summing to $\frac{\sigma(n)-1}{2} $. The complementary set of positive divisors of $n$ sums to the same value, and so these two sets form a two-layered partition for $n$.
    \end{proof}
    
    With the help of the next two theorems, we can generate some new two-layered numbers by knowing a two-layered number.
    
    \begin{definition} [Definition 1 in \cite{rao}]
    A positive integer $n$ is said to be a Zumkeller number if the positive divisors of $n$ can be
    partitioned into two disjoint subsets of equal sum. A Zumkeller partition for a Zumkeller number $n$ is
    a partition $\set{A, B}$ of the set of positive divisors of $n$ so that each of $A$ and $B$ sums to the same value.
    \end{definition}
    
    \begin{theorem}
    \label{npalpha}
    Let $n$ be a two-layered number and $p$ be a prime number with $(n,p)=1$, then $np^\alpha$ is a two-layered number for any even positive integer $\alpha$.
    \end{theorem}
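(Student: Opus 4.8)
The plan is to manufacture a two-layered partition of $np^\alpha$ from the given two-layered partition $\{A,B\}$ of $n$ by exploiting the multiplicative structure of the divisor set. Write $S=\frac{\sigma(n)-1}{2}$ for the common value $\sum_{d\in A}d=\sum_{d\in B}d$, and assume $n\in A$. Since $(n,p)=1$, every divisor of $np^\alpha$ has a unique expression $dp^j$ with $d\mid n$ and $0\le j\le\alpha$, so the divisors of $np^\alpha$ other than $1$ split as
\[
W\sqcup L,\qquad W=\{\,dp^j: d\mid n,\ d\ne1,\ 0\le j\le\alpha\,\},\qquad L=\{\,p,p^2,\dots,p^\alpha\,\}.
\]

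First I would dispose of the bulk $W$ by a product-balancing observation: if a finite set of positive integers splits into two subsets of equal sum, then so does its product with any finite set of positive integers, since one may color $uv$ according to the side containing $u$ (the products are genuinely distinct here, as coprimality makes the $n$-part and the $p$-part of each divisor unambiguous). Applying this to the balanced set $\{d\mid n:d\ne1\}=A\sqcup B$ and to $\{1,p,\dots,p^\alpha\}$ shows that $W=\bigsqcup_{j=0}^{\alpha}p^jA\ \sqcup\ \bigsqcup_{j=0}^{\alpha}p^jB$ is balanced, each half summing to $S\,\sigma(p^\alpha)$. Note this step uses nothing about the parity of $\alpha$.

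The hypothesis that $\alpha$ is even enters through the remaining block $L$. The quantity to be halved is $\sigma(np^\alpha)-1=\sigma(n)\sigma(p^\alpha)-1$, and since $\sigma(n)$ is odd by Proposition \ref{sigmaodd}(1), this is even exactly when $\sigma(p^\alpha)=1+p+\dots+p^\alpha$ is odd; $\alpha$ being even guarantees this (for odd $p$ it is a sum of an odd number of odd terms, and for $p=2$ it is $2^{\alpha+1}-1$). Equivalently, $L$ has even sum $P:=\sum_{j=1}^{\alpha}p^j=\sigma(p^\alpha)-1$, and the target for each side becomes $T=S\sigma(p^\alpha)+\tfrac{P}{2}$.

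It therefore remains to absorb $L$ into the balanced decomposition of $W$, and this is the step I expect to be the main obstacle. It suffices to exhibit a subset $R$ of $L\cup\bigsqcup_{j}p^jB$ with $\sum R=\tfrac{P}{2}$: then $\bigsqcup_{j}p^jA$ together with $R$ is a set of divisors of $np^\alpha$ (excluding $1$) summing to $T$, disjoint from its complement, which supplies the matching half. The difficulty is that subset sums of divisors are constrained — the value $1$ is unavailable, and odd targets require odd divisors — so $\tfrac{P}{2}$ cannot be read off a single block. The way I would push this through is a base-$p$ argument: write $\tfrac{P}{2}=\sum_{j=0}^{\alpha}p^j t_j$ where each digit $t_j$ is realized as a subset sum of $\{1\}\cup B$ (and of $B$ alone when $j=0$), so that the chosen divisors $p^j d$ lie in $L\cup\bigsqcup_{j}p^jB$, using that $n$ is even and abundant (Proposition \ref{sigmaodd}(3)) and hence carries enough small divisors to realize each required digit, with carries handled by trading a digit $t_j\ge p$ for an increment at level $j+1$. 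Alternatively, once such a representation is available, Theorem \ref{twolayprop} may be invoked directly, since it is then equivalent to check that $\frac{\sigma(np^\alpha)-1}{2}-np^\alpha=S\sigma(p^\alpha)+\tfrac{P}{2}-np^\alpha$ is a sum of distinct proper divisors of $np^\alpha$ excluding $1$, which the same construction yields after setting the top divisor $np^\alpha$ aside.
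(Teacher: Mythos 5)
Your decomposition into $W=\bigsqcup_{j}p^j(A\sqcup B)$ and the orphan block $L=\{p,p^2,\dots,p^\alpha\}$ is correct, and it isolates exactly what the paper's own proof glosses over: the paper begins ``suppose $\{A,B\}$ is a Zumkeller partition of $n$,'' which cannot exist (a two-layered $n$ has $\sigma(n)$ odd by Proposition \ref{sigmaodd}, a Zumkeller number needs $\sigma(n)$ even), and its displayed partition $\{(A\setminus\{1\})\cup pA\cup\dots\cup p^\alpha A,\ (B\setminus\{1\})\cup pB\cup\dots\cup p^\alpha B\}$ simply omits the pure powers $p,\dots,p^\alpha$, so it is not a partition of the divisors of $np^\alpha$ at all. (In the genuine Zumkeller analogue this construction works because $1$ lies in one of the two sides, so each block $p^jD$ is wholly covered; excluding $1$ is what strands $L$.) Your reduction is also sound up to the point you flagged: it does suffice to find $R\subseteq L\cup\bigsqcup_j p^jB$ with $\sum R=P/2$.

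But the step you yourself identified as the main obstacle is not merely hard; it is impossible in general, and your own base-$p$ bookkeeping shows why. Any sum of distinct divisors of $np^\alpha$ is $\sum_j s_jp^j$ where $s_j$ is a subset sum of divisors of $n$, so $s_j\le\sigma(n)$; once $p>\sigma(n)$ there is no carrying and the digits are forced. For odd $p$ one has $P/2=\frac{p+1}{2}(p+p^3+\dots+p^{\alpha-1})$, i.e., digit $\frac{p+1}{2}$ at each odd position, while the sums available to you at a level from $B\cup\{1\}$ are at most $S+1=\frac{\sigma(n)+1}{2}$; so for $p>\sigma(n)$ no such $R$ exists. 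The failure is not an artifact of your particular $R$-reduction: by Theorem \ref{twolayprop}, for $\alpha=2$ the quantity to represent is
\begin{equation*}
\frac{\sigma(np^2)-1}{2}-np^2 \;=\; S+\Bigl(S+\tfrac{p+1}{2}\Bigr)p+(S-n)p^2,
\end{equation*}
a valid base-$p$ expansion when $p>2S+1=\sigma(n)$, whose middle digit $S+\frac{p+1}{2}$ then exceeds $\sigma(n)$, the largest possible level sum. Concretely, take $n=36$ (so $S=45$, $\sigma(36)=91$) and $p=97$: the target is $93844=45+94\cdot 97+9\cdot 97^2$, and $94>91$, so $36\cdot 97^2$ is not two-layered. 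Hence the theorem as stated is false for every prime $p>\sigma(n)$, your auxiliary assumptions (that $n$ is even, and that $B$ ``carries enough small divisors'') were unjustifiable symptoms of this, and no completion of the absorption step can exist; a correct statement would need a size restriction such as $p<\sigma(n)$, or the disjoint-exponent mechanism of Theorem \ref{secondwaygenerate}.
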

    
    \begin{proof}
    Suppose that $\set {A,B}$ is a Zumkeller partition of $n$. Then $\set {(A \setminus \set{1}) \cup (pA)\cup (p^2A)\cup \dots \cup (p^\alpha A), (B \setminus \set{1}) \cup (pB) \cup(p^2B) \cup \dots \cup (p^\alpha B)}$ is a two-layered partition of $np^\alpha$.
    \end{proof}
    
    \begin{theorem}
    \label{secondwaygenerate}
    Suppose that $n$ is a two-layered number and $p_1^{k_1}p_2^{k_2} \dots p_m^{k_m}$ is the prime factorization of $n$. Then for any nonnegative even integers $\alpha_1, \dots \alpha_m$, the integer $$p_1^{k_1+\alpha_1(k_1+1)}p_2^{k_2+\alpha_2(k_2+1)} \dots p_m^{k_m+\alpha_m(k_m+1)}$$ is a two-layered number. 
    \end{theorem}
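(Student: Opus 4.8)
The plan is to change one prime's exponent at a time and induct, so it suffices to treat the single‑prime case: if $n=p^{k}M$ is two‑layered with $p\nmid M$, and $\alpha$ is a nonnegative even integer, then $N:=p^{\,k+\alpha(k+1)}M$ is two‑layered. Granting this, one applies it successively to $p_1,p_2,\dots,p_m$; since raising the exponent of $p_i$ does not disturb the other primes, at each stage the current number is two‑layered (inductive hypothesis) and still carries $p_{i+1}$ to its original power $k_{i+1}$, so the single‑prime step applies again. Composing the $m$ steps produces the asserted number $p_1^{k_1+\alpha_1(k_1+1)}\cdots p_m^{k_m+\alpha_m(k_m+1)}$.

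For the single‑prime step I would first set $P:=p^{k+1}$ and note $N=nP^{\alpha}$. Writing the exponent $b$ of $p$ in a divisor of $N$ by Euclidean division as $b=(k+1)q+r$ with $0\le r\le k$ and $0\le q\le\alpha$, every divisor of $N$ is uniquely of the form $P^{q}e$ with $e$ a divisor of $n$; thus $\mathrm{Div}(N)=\bigsqcup_{q=0}^{\alpha}P^{q}\,\mathrm{Div}(n)$ as a disjoint union. The identical computation on sums of divisors gives $\sigma(N)=\sigma(n)\,\sigma_P$ with $\sigma_P:=1+P+\cdots+P^{\alpha}$, and since $\alpha$ is even $\sigma_P$ is odd, so $\sigma(N)$ is odd, in agreement with Proposition \ref{sigmaodd}.

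The point I would stress is that this is formally the situation of Theorem \ref{npalpha} with the coprime prime $p$ there replaced by $P=p^{k+1}$: the construction in that proof uses only the disjoint decomposition $\bigsqcup_{q}P^{q}\,\mathrm{Div}(n)$, which we have just re‑established. So I would take a two‑layered partition $\set{A,B}$ of $n$ with $n\in A$ (each of $A,B$ summing to $s:=\tfrac{\sigma(n)-1}{2}$), form the scaled copies $P^{q}A,P^{q}B$ for $q=0,\dots,\alpha$, and distribute them together with the pure powers $P,P^{2},\dots,P^{\alpha}$ so that each side totals $s\,\sigma_P+\tfrac{\sigma_P-1}{2}=\tfrac{\sigma(N)-1}{2}$. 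Alternatively one may bypass the explicit partition and invoke Theorem \ref{twolayprop}, reducing the claim to writing $\tfrac{\sigma(N)-1}{2}-N$ as a sum of distinct divisors of $N$ different from $1$.

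The genuine obstacle is the placement of the pure powers $P^{q}$ (the divisors $P^{q}\cdot 1$), which are exactly the divisors missing from every scaled copy of $A$ or $B$. A naive assignment keeping all of $\bigcup_{q}P^{q}A$ on one side fails, because $\set{P,\dots,P^{\alpha}}$ cannot be split into two equal‑sum parts: the top power $P^{\alpha}$ already exceeds the sum of all the smaller ones. Balancing the two sides therefore forces the scaled blocks and the pure powers to be interleaved, and this is precisely where $\alpha$ even enters — it makes $\sigma_P$ odd, so $\tfrac{\sigma_P-1}{2}$ is an integer, and it lets one pair the powers as $P^{2i-1}+P^{2i}$ and absorb them using swapped copies of $A$ and $B$ across adjacent layers. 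Carrying out this balancing rigorously — equivalently, showing $\tfrac{\sigma_P-1}{2}$ is realizable as a subset sum of the remaining divisors of $N$ — is the crux of the argument; the rest is the multiplicative bookkeeping recorded above.
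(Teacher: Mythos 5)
Your reduction to one prime at a time and your layer decomposition are exactly the skeleton of the paper's own proof: with $P=p^{k+1}$ and $D$ the set of positive divisors of $n$, the divisors of $N=nP^{\alpha}$ are the disjoint union of the sets $P^{q}D$ for $0\le q\le\alpha$, and $\sigma(N)=\sigma(n)\sigma_{P}$ with $\sigma_{P}=1+P+\cdots+P^{\alpha}$ odd. But your write-up stops at what you yourself call the crux, and that crux is a genuine gap, not deferred bookkeeping. Fix a two-layered partition $\{A,B\}$ of $D\setminus\{1\}$ with common sum $s=\frac{\sigma(n)-1}{2}$. At the level of whole blocks the only moves are: route $P^{q}A$ and $P^{q}B$ to opposite sides (net effect on the difference of side-sums is zero, however you swap them), put both on one side (net effect $\pm 2sP^{q}$), and place each pure power $P^{q}=P^{q}\cdot 1$ on one side ($\pm P^{q}$). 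Hence a block-level construction exists if and only if one can solve $s\bigl(\sum_{q\in I^{+}}P^{q}-\sum_{q\in I^{-}}P^{q}\bigr)+\sum_{q\in U}P^{q}=\frac{\sigma_{P}-1}{2}$ with disjoint $I^{+},I^{-}\subseteq\{0,1,\dots,\alpha\}$ and $U\subseteq\{1,\dots,\alpha\}$. Your proposed pairing does not solve it: ``swapped copies of $A$ and $B$ across adjacent layers'' contribute equally to both sides and absorb nothing, and within a pair of layers the available corrections are $0$, $P^{2i-1}$, $P^{2i}$, $P^{2i-1}+P^{2i}$ plus multiples of $s$, none of which equals the required $\frac{P^{2i-1}(P+1)}{2}$, since $s$ bears no arithmetic relation to $\frac{P+1}{2}$. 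Any honest completion must therefore break the blocks $P^{q}A$, $P^{q}B$ into finer subsets, which needs information about subset sums of divisors of $n$ that two-layeredness alone does not supply; that missing balancing argument is the entire content of the theorem, whether you phrase it as a partition or, via Theorem \ref{twolayprop}, as representing $\frac{\sigma(N)-1}{2}-N$.

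You should also know that the obstruction you flagged is real and defeats the paper's own proof, which is precisely the naive assignment you reject. The paper begins with ``a Zumkeller partition $\{A,B\}$ of $n$'' --- no such partition exists, since $\sigma(n)$ is odd for a two-layered $n$ by Proposition \ref{sigmaodd} while a Zumkeller partition forces $\sigma(n)$ even --- and under the natural repair (adjoin $1$ to one side of a two-layered partition) the two displayed sides differ by exactly $P+P^{2}+\cdots+P^{\alpha}$: the pure powers land on one side and nothing compensates. Worse, the identical construction is used for Theorem \ref{npalpha}, where the statement itself is false: $36$ is two-layered, but $N=36\cdot 97^{2}$ is not. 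Indeed, by Theorem \ref{twolayprop} one would need $\frac{\sigma(N)-1}{2}-N=93844$ as a sum of distinct divisors of $N$ other than $1$ and $N$; writing any candidate sum as $t_{0}+97\,t_{1}+97^{2}\,t_{2}$ with $t_{0},t_{1},t_{2}$ subset sums of divisors of $36$ (where $1$ is barred from $t_{0}$ and $36$ from $t_{2}$), reduction modulo $97$ forces $t_{0}=45$, hence $t_{1}+97t_{2}=967$; then $t_{1}\le\sigma(36)=91$ gives $97t_{2}\ge 876$, so $t_{2}\ge 10$ and $97t_{2}\ge 970>967$, a contradiction. For the present theorem the factor $P=p^{k+1}$ is tied to the divisor structure of $n$ (abundance keeps the prime powers of $n$ comparatively small), so the statement may still be true; but neither the paper's two-line construction nor your outline proves it, and you cannot close your gap by importing the paper's argument.
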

    
    \begin{proof}
    If we show that $p_1^{k_1+\alpha_1(k-1+1)}p_2^{k_2} \dots p_m^{k_m}$ the proof will be completed. Suppose that $\set {A,B}$ is a Zumkeller partition of $n$. If $D$ is the set of positive divisors of $n$, then $(D \setminus \set {1}) \cup (p_1^{k_1+1} D) \cup (p_1^{2(k_1+1)}D) \cup \dots \cup (p_1^{\alpha_1(k_1+1)}D))$ is the set of positive divisors of  $p_1^{k_1+\alpha_1(k-1+1)}p_2^{k_2} \dots p_m^{k_m}$ excluding 1. Therefore a two-layered partition for  $p_1^{k_1+\alpha_1(k-1+1)}p_2^{k_2} \dots p_m^{k_m}$ is $\set {A \setminus \set {1} \cup (p_1^{k_1+1}A) \cup (p_1^{2(k_1+1)}A) \cup \dots \cup (p_1^{\alpha_1(k_1+1)}A), B \setminus \set {1} \cup (p_1^{k_1+1}B) \cup (p_1^{2(k_1+1)}B) \cup \dots \cup (p_1^{\alpha_1(k_1+1)}B)  }$ and the proof is complete.
    \end{proof}
    
\section{semi-practical numbers and two-layered numbers}
 Practical numbers have been introduced by Srinivasan in 1948 as what follows:  
    \begin{definition}
A positive integer $n$ is a practical number if every positive integer less than $n$ can be represented as a sum of distinct positive divisors of $n$.\cite{practical}
    \end{definition}
    
Because of the structure of two-layered number, if we change the definition of practical numbers and call them semi-practical numbers, we can drive some useful relation between them and two-layered numbers, so I define semi-practical numbers as what follows:

\begin{definition}
\label{defsemipractical}
A positive integer $n$ is practical if every positive integer $x$ where $1<x<n$ can be represented as a sum of distinct positive divisors of $n$ excluding $1$. 
\end{definition}    
    
    \begin{proposition}
    \label{divis12}
    Every semi-practical number is divisible by $12$.
    \end{proposition}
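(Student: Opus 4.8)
The plan is to force the specific divisors $3$ and $4$ to be present in $n$ and then conclude $12 \mid n$ from $\lcm(3,4) = 12$ (using $\gcd(3,4)=1$). The mechanism throughout is that every summand permitted in a representation is a divisor of $n$ that is at least $2$, so the small target values can only be reached in extremely restricted ways, and each such restriction pins down a divisor.

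First I would observe that for a semi-practical number with $n \geq 5$ the targets $x = 3$ and $x = 4$ both satisfy $1 < x < n$, so each is expressible as a sum of distinct divisors of $n$ exceeding $1$. For $x = 3$: any representation using two or more summands has sum at least $2 + 3 = 5 > 3$, while $3 = 2 + 1$ is forbidden because the summand $1$ is excluded; hence $3$ itself must be a divisor, giving $3 \mid n$. For $x = 4$: a representation with two or more summands would require distinct divisors $\geq 2$ summing to $4$, but $2 + 2$ is not allowed (the summands must be distinct) and the only alternative would use $3$, forcing the forbidden summand $4 - 3 = 1$; hence $4$ must itself be a divisor, giving $4 \mid n$ (which in particular re-proves $2 \mid n$). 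Combining $3 \mid n$ and $4 \mid n$ yields $12 \mid n$.

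The main obstacle is entirely at the low end: the forcing argument only applies once $n \geq 5$, since the hypothesis on representability of $x$ requires $x < n$. I would dispatch this by direct inspection of the finitely many cases $n \leq 4$, checking that none of them is a genuine semi-practical number (for instance $n = 3$ cannot represent $x = 2$ and $n = 4$ cannot represent $x = 3$, the only truly degenerate case being the vacuous $n = 2$). Once $n \geq 5$ is secured, both representability constraints for $x = 3$ and $x = 4$ are active simultaneously and the conclusion $12 \mid n$ is immediate.
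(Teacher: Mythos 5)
Your proof is correct and follows essentially the same route as the paper: the targets $3$ and $4$ cannot be written as sums of more than one distinct divisor exceeding $1$, so they must themselves divide $n$, forcing $12 \mid n$. You are in fact somewhat more careful than the paper, which silently assumes $n > 4$ so that the targets lie in the required range and never addresses the vacuously semi-practical cases $n \leq 2$ that you at least flag.
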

    \begin{proof}
    Since we can not write $2,3,$ and $4$ as sums of more than one positive integer greater than $1$, they should be divisors of our semi-practical number.
    \end{proof}
    
    \begin{theorem}
    \label{spracsigma}
    A positive integer $n$ is is a semi-practical number if and only if every positive integer $x$ where $1<x<\sigma(n)$, is a sum of distinct positive divisors of $n$ excluding $1$.
    \end{theorem}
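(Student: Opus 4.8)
The forward (``if'') direction is immediate: for $n>1$ both $1$ and $n$ divide $n$, so $\sigma(n)>n$, and the hypothesis that every integer $x$ with $1<x<\sigma(n)$ is a sum of distinct divisors of $n$ greater than $1$ then covers in particular every such $x$ with $1<x<n$, which is exactly the defining property of a semi-practical number.

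For the converse, set $D=\{\,d:d\mid n,\ d>1\,\}$, so that $\sum_{d\in D}d=\sigma(n)-1$, and let $W$ be the set of subset sums of $D$; writing $[a,b]$ for the set of integers from $a$ to $b$, the assertion to prove is $[2,\sigma(n)-1]\subseteq W$. Two facts drive the argument. The first is \emph{complementation}: if $A\subseteq D$ sums to $s$ then $D\setminus A$ sums to $\sigma(n)-1-s$, so $s\in W\iff \sigma(n)-1-s\in W$; thus $W$ is symmetric about $(\sigma(n)-1)/2$ and it is enough to reach every value up to this midpoint. The second is that semi-practicality delivers the bottom of the range for free, $[2,n-1]\subseteq W$, and since $n\in D$ we also have $n\in W$, hence $[2,n]\subseteq W$. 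Feeding $[2,n]$ through complementation already produces the top band $[\sigma(n)-1-n,\ \sigma(n)-3]\subseteq W$ together with $\sigma(n)-1\in W$.

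It remains to bridge the middle band between $n$ and $\sigma(n)-1-n$, and here I would lean on the divisor $n$ directly. For a target $x$ with $n+2\le x\le 2n-1$ the value $x-n$ lies in $[2,n-1]$, so its semi-practical representation uses only divisors strictly smaller than $n$ (these are the only ones small enough to occur); adjoining $n$ yields $x\in W$ and covers $[n+2,\,2n-1]$. Repeating this device with the next-largest divisors, while letting the symmetry argument descend to meet it from above, is meant to close the remaining gap. The small divisors $2,3,4,6$ present because $12\mid n$ (Proposition \ref{divis12}) are exactly what allow these adjustments to shift a partial sum by any small amount and so avoid leaving holes.

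The step I expect to be the real obstacle is the extreme top of the range, concretely the value $\sigma(n)-2$, which is precisely the one value in $[2,\sigma(n)-1]$ that the top band and the bridging argument leave untouched. It lies at the very edge of feasibility: under complementation its partner is $1$, and since every element of $D$ is at least $2$, this value is approached neither by the symmetry in the routine way nor as the full total with a tiny piece removed. Securing this single endpoint is therefore the decisive case on which the full stated range turns, and I expect essentially all of the difficulty to concentrate here; it must be treated by a separate and careful argument rather than by the interval-filling used for the rest.
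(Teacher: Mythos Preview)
Your complementation idea is sound—arguably a cleaner organizing principle than the paper's greedy subtraction—and you have located the obstruction exactly: the value $\sigma(n)-2$. But this case is not merely hard; it is impossible. Your own symmetry $s\in W\iff \sigma(n)-1-s\in W$ shows that $\sigma(n)-2\in W$ would force $1\in W$, and since every element of $D$ is at least $2$, no subset of $D$ sums to $1$. Concretely, $n=12$ is semi-practical with $\sigma(12)=28$, yet $26$ is not a sum of distinct divisors of $12$ greater than $1$. So the theorem as stated is false, and no ``separate and careful argument'' can rescue that endpoint; the correct range is $2\le x\le\sigma(n)-1$ with $x\neq\sigma(n)-2$.

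The paper's own proof breaks at the same place. In its $r=1$ branch it drops the divisor $m_\ell$ and asserts that the new remainder $r_1$ satisfies $1<r_1<m_\ell$; but in fact $r_1=r+m_\ell=m_\ell+1$, so this bound is simply wrong, and for $x=\sigma(n)-2$ one finds $\ell=2$, $m_\ell=3$, $r_1=4$, with $4$ not expressible using $\{2,3\}$ alone. A secondary gap in your sketch is $x=n+1$: your shift-by-$n$ step covers $[n+2,2n-1]$ but misses $n+1$ because $n+1-n=1\notin[2,n-1]$; the paper handles this case separately via $n+1=\tfrac{n}{2}+\tfrac{n}{3}+r$ with $1<r<\tfrac{n}{3}$, and you would need the same patch. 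With the range corrected to exclude $\sigma(n)-2$, your complementation-plus-bridging outline is essentially what is required.
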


    \begin{proof}
    Suppose that $n$ is a semi-practical number. I introduce an algorithm for writing all positive integer $x$ between $n$ and $\sigma(n)$ as sum of distinct positive divisors of $n$ excluding $1$.
    
    First, let $x$ be $n+1$. Since $n$ is semi-practical, by Propositin \ref{divis12}, it is divisible by $n/2$ and $n/3$. Hence, $n+1 = n/2 + n/3 + r$, where $r$ is a positive integer. By Proposition \ref{divis12}, $n > 6$, so $n+1-n/2-n/3 < n/3$. On the other hand, since $n$ is a semi-practical number and $r<n/3<n$, $r$ is equal to some of distinct divisors of $n$ which are less than $n/3$ and greater than $1$.
    
    For $n+1<x<\sigma(n)$, let the positive divisors of $n$ which are greater than $1$ be written in increasing order as $m_1<m_2 < \dots < m_k$. Now we can write $x= \sum_{i=\ell}^k m_i+r$ where $1\leq \ell \leq k$ and $0 \leq r < m_{\ell -1}$. If $r=0$ then $x$ is a sum of distinct divisors of $n$. If $1<r< m_{\ell -1}$, since $n$ is semi-practical and $r<n$, then we can write $r$ as a sum of distinct divisors of $n$ which are less than $m_{\ell-1}$, so $x$ is a sum of distinct divisors of $n$. If $r=1$, then we can write $x=\sum_{i=\ell+1}^k+r_1$ where $1<r_1<m_{\ell}$. since $n$ is semi-practical and $r<n$, then $r_1$ is sum of distinct divisors of $n$ which are less than $m_{\ell}$, so $x$ is a sum of distinct divisors of $n$.
    
    Conversely, if every positive integer less than $\sigma(n)$ excluding $1$, is a some of distinct positive divisors of $n$ excluding $1$, it is clear that $n$ is semi-practical.
    
    \end{proof}

    \begin{proposition}
    \label{keyprop}
    A semi-practical number $n$ is two-layered if and only if $\sigma(n)$ is odd.
    \end{proposition}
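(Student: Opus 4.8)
The plan is to split the biconditional and handle the two implications very asymmetrically, since one of them is essentially free. For the forward direction, suppose $n$ is two-layered; then the statement that $\sigma(n)$ is odd is exactly Proposition \ref{sigmaodd}(1), which holds for \emph{every} two-layered number and does not use the semi-practical hypothesis at all. So I would simply cite that result and move on. All the genuine content lives in the backward direction.

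For the backward direction I assume $n$ is semi-practical with $\sigma(n)$ odd, and the key move is to aim directly at the half-sum $T=\frac{\sigma(n)-1}{2}$, which is a genuine integer precisely because $\sigma(n)$ is odd. The point is that the total of all divisors of $n$ other than $1$ equals $\sigma(n)-1=2T$, so \emph{any} collection of such divisors that sums to $T$ automatically has a complementary collection summing to $T$ as well. First I would check that $T$ lies in the admissible range $1<T<\sigma(n)$ demanded by Theorem \ref{spracsigma}: the upper bound is immediate, and the lower bound follows from $\sigma(n)\geq n+1\geq 13>3$, using $n\geq 12$ from Proposition \ref{divis12}. Then I would invoke Theorem \ref{spracsigma} at the single value $x=T$ to obtain a set $A$ of distinct divisors of $n$ excluding $1$ with $\sum A=T$; setting $B$ to be the complementary set of divisors-excluding-$1$ gives $\sum B=2T-T=T$. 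Hence $\set{A,B}$ is a two-layered partition of $n$, and both parts are nonempty because $T>0$.

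The step I expect to carry the weight is recognizing that Theorem \ref{spracsigma} should be applied at exactly $T=\frac{\sigma(n)-1}{2}$ and that its complement is then forced to balance; once this is seen, the argument collapses to a single line. There is genuinely no delicate estimate or case analysis here, because all the combinatorial work of representing integers as sums of divisors has already been absorbed into Theorem \ref{spracsigma} via the semi-practical hypothesis. The only things requiring care are the two bookkeeping checks — that $T$ is an integer (from $\sigma(n)$ odd) and that $T$ falls strictly between $1$ and $\sigma(n)$ (from divisibility by $12$) — together with the remark that $A$ and $B$ are nonempty, which keeps the result honestly a partition into two parts.
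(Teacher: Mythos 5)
Your proposal is correct and follows essentially the same route as the paper: cite Proposition \ref{sigmaodd} for the forward direction, then apply Theorem \ref{spracsigma} at $x=\frac{\sigma(n)-1}{2}$ (an integer since $\sigma(n)$ is odd) and take the complementary set of divisors to balance. In fact the paper's own proof trails off mid-sentence at exactly this point, and your explicit range check $1<\frac{\sigma(n)-1}{2}<\sigma(n)$ and complementation step supply the details it omits.
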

    
    \begin{proof}
    If $n$ is two-layered number, then $\sigma(n)$ is odd by Proposition \ref{sigmaodd}. Conversely, if $\sigma(n)$ is odd, then $\frac{\sigma(n)-1}{2}$ is a positive integer smaller than $\sigma(n)$. Since $n$ is a semi-practical number, using Proposition \ref{spracsigma}. 
    \end{proof}

    \begin{theorem}
    \label{somesum}
    Let $n$ be a positive integer and $p$ be a prime with $(n, p) = 1$. Let $D$ be the set of all positive
    divisors of $n$ including $1$. The following conditions are equivalent:
    \begin{enumerate}
    \item $np$ is two-layered.
    \item There exist two partitions $\set {D_1,D_2}$ and $\set{D_3, D_4}$ of $D \setminus \set{1}$ such that $$p(\sum_{d \in D_1}d-\sum_{d \in D_2}d)=(\sum_{d \in D_3}d-\sum_{d \in D_4}d).$$
    \item There exists a partition $\set {D_1,D_2}$ of $D\setminus \set{1}$ and subsets $A_1 \subseteq D_1$ and $A_2 \subseteq D_2$ such that $$\frac{p+1}{2}(\sum_{d \in D_1}d-\sum_{d \in D_2}d)=(\sum_{d \in A_1}d-\sum_{d \in A_2}d).$$
    \end{enumerate}
    \end{theorem}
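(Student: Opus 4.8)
The engine of the whole equivalence is the standard reformulation of ``two-layered'' as a signed-sum condition: a positive integer $m$ is two-layered exactly when one can attach a sign $\pm 1$ to every divisor of $m$ other than $1$ so that the resulting signed sum is $0$, the $+$ divisors and the $-$ divisors then being the two equal-sum blocks. My plan is to run all three conditions through this lens and read off precisely when the signed sums can be made to vanish.

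First I would set up the divisor structure of $np$. Since $(n,p)=1$ and $p$ is prime, the divisors of $np$ are exactly $D\cup pD$, a disjoint union, so the divisors of $np$ other than $1$ are $(D\setminus\set{1})\cup pD$; note that $pD$ contains the genuine divisor $p=p\cdot 1$. Splitting a sign assignment into its unscaled part, carried by $D\setminus\set{1}$, and its $p$-scaled part, carried by the copy $pD$ of $D$, the vanishing of the signed sum becomes a balance equation of the shape $p\cdot(\text{signed combination of divisors})=(\text{signed combination of divisors})$. Recording the two sign patterns as partitions $\set{D_1,D_2}$ and $\set{D_3,D_4}$ turns this into statement $(2)$, giving $(1)\Leftrightarrow(2)$. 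The subtlety I must respect is that the $p$-scaled side ranges over the \emph{full} divisor set $D$ (because $p=p\cdot1\in pD$), whereas the unscaled side omits $1$; keeping the divisor $1$ on the correct side of the balance equation is the heart of this step.

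For $(2)\Leftrightarrow(3)$ the argument is purely algebraic and, pleasantly, does not reopen the combinatorics. Writing $s_d\in\set{\pm 1}$ for the sign of $d$ coming from $\set{D_1,D_2}$ and $t_d\in\set{\pm 1}$ for the sign from $\set{D_3,D_4}$, the balance equation in $(2)$ reads $\sum_{d}(p\,s_d-t_d)\,d=0$ over the common ground set, and the coefficient $p\,s_d-t_d$ of each $d$ equals $s_d(p-s_dt_d)\in s_d\set{p-1,\,p+1}$. Condition $(3)$ is this same equation halved: the coefficient $\tfrac{p+1}{2}s_d-u_d$ with $u_d\in\set{0,s_d}$ lies in $s_d\set{\tfrac{p-1}{2},\,\tfrac{p+1}{2}}$, and since $\set{p-1,\,p+1}=2\cdot\set{\tfrac{p-1}{2},\,\tfrac{p+1}{2}}$ the two balance equations are scalar multiples of one another. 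Under this matching the choice $u_d=s_d$, that is $d\in A_1\cup A_2$, corresponds exactly to the two signs agreeing, $t_d=s_d$; so the subsets $A_1\subseteq D_1$ and $A_2\subseteq D_2$ of $(3)$ encode precisely the partition $\set{D_3,D_4}$ of $(2)$. I would make this dictionary explicit and verify it in both directions.

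The step I expect to fight with is the bookkeeping of the divisor $1$ together with the integrality of $\tfrac{p+1}{2}$. The number $1$ is discarded in a two-layered partition, yet its multiple $p$ is an honest divisor of $np$ that must be placed, so the conditions line up only if $1$ is tracked on exactly the right side of each balance equation; a careless choice of ground set ($D$ versus $D\setminus\set{1}$) would break the equivalence. I would also record how this dovetails with Proposition \ref{sigmaodd}: for $\tfrac{p+1}{2}$ to pair correctly against an integer signed sum the relevant parity must cooperate, and the parity of $\sigma(n)$, hence of $\sigma(np)=(p+1)\sigma(n)$, is exactly what controls this. Checking that these parity constraints hold automatically once $np$ is two-layered is the last point I would pin down.
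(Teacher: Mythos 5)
Your proposal is correct and takes essentially the same route as the paper's proof: the same decomposition of the divisors of $np$ exceeding $1$ as $(pD)\cup(D\setminus\set{1})$ drives $(1)\Leftrightarrow(2)$, and your sign-coefficient dictionary for $(2)\Leftrightarrow(3)$ is precisely the paper's averaging identity $\frac{p+1}{2}\bigl(\sum_{d\in D_1}d-\sum_{d\in D_2}d\bigr)=\frac{1}{2}\bigl[p\bigl(\sum_{d\in D_1}d-\sum_{d\in D_2}d\bigr)+\bigl(\sum_{d\in D_1}d-\sum_{d\in D_2}d\bigr)\bigr]$ with $A_1=D_1\cap D_3$ and $A_2=D_2\cap D_4$. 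The bookkeeping subtlety you flag is moreover genuine: the theorem's statement writes both partitions over $D\setminus\set{1}$, but the argument (the paper's as well as yours) actually requires $\set{D_1,D_2}$ to partition all of $D$, since $p=p\cdot 1$ is a divisor of $np$ that must be placed, and the paper's own construction $D_1=\frac{1}{p}(A\cap pD)$, $D_2=\frac{1}{p}(B\cap pD)$ silently does exactly that.
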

    
    \begin{proof}
    It is clear that $(pD) \cup (D\setminus\set{1})$ is the set of all positive divisors of $np$ excluding $1$.
    
    $(1) \Rightarrow$ (2). Suppose that $np$ is two-layered. Hence, there is a two-layered partition $\set {A,B}$ of $(pD) \cup (D\setminus\set{1})$. Let $D_1=\frac{1}{p}(A \cap (pD))$, $D_2=\frac{1}{p}(B \cap (pD))$, $D_3=B \cap (D\setminus\set{1})$, $A \cap (D\setminus \set{1})$, then $$p \sum_{d \in D_1}d + \sum_{d \in D_4}d = p \sum_{d \in D_2}d + \sum_{d \in D_3}d.$$ and the proof is complete.
    
    $(2) \Rightarrow (3)$. Let $A_1 = D_1 \cap D_3$ and $A_2 = D_2\cap D_4$. We have

    \begin{align}
    \frac{p+1}{2}(\sum_{d \in D_1}d - \sum_{d \in D_2}d) & =  \frac{1}{2}[ p (\sum_{d \in D_1}d - \sum_{d \in D_2}d)+(\sum_{d \in D_1}d- \sum_{d \in D_2}d)] \nonumber \\
    &  = \frac{1}{2} [\sum_{d \in D_3}d - \sum_{d \in D_4}d + \sum_{d \in D_1}d - \sum_{d \in D_2}d] \nonumber\\
    &  = \frac{1}{2}[2 (\sum_{d \in D_1 \cap D_3}d) - 2 (\sum_{d \in D_2 \cap D_4}d)] \nonumber \\
    &  = \sum_{d \in A_1}d - \sum_{d \in A_2}d. \nonumber
    \end{align}
    
    $(3) \Rightarrow (1) $. We can rewrite the equation in $(3)$ as follows:
    $$\frac{p}{2} \sum_{d \in D_1}d + \frac{1}{2} \sum_{d \in A_2} + \frac{1}{2} \sum_{D_1 \setminus A_1}d = \frac{p}{2} \sum_{d \in D_2}d + \frac{1}{2}\sum_{d \in A_1}d + \frac{1}{2} \sum_{d \in D_2 \setminus A_2}d.$$
    By multiplying this by $2$, we obtain the two-layered partition $\set{(pD_1)\cup A_2 \cup (D_1-A_1), (pD_2)\cup A_1 \cup (D_2-A_2)}$ for $np$, so $np$ is a two-layered number.
    \end{proof}

    \begin{proposition}
    \label{orderdivisors}
    Let the positive divisors of $n$ excluding $1$ be written in increasing order as follows: $ a_1 < a_2 < \dots <a_k =n.$ If $a_{i+1} < 2a_i$ for all $1 \leq i < k$ and $\sigma(n)$ is odd, then $n$ is two-layered.
    \end{proposition}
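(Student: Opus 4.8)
The plan is to use Theorem~\ref{twolayprop} to turn the statement into a subset-sum question and then to exploit the ``no large jump'' hypothesis to show the required target is attainable. By Theorem~\ref{twolayprop}, $n$ is two-layered precisely when $\tfrac{\sigma(n)-1}{2}-n$ is a sum of distinct divisors drawn from $a_1,\dots,a_{k-1}$; equivalently, writing $T=\tfrac{\sigma(n)-1}{2}$, it suffices to produce a subset of $\{a_1,\dots,a_k\}$ whose sum is $T$, since its complement then also sums to $a_1+\dots+a_k-T=(\sigma(n)-1)-T=T$. Because $\sigma(n)$ is odd, $T$ is a genuine integer, and it sits in the middle of the range, $2\le T\le \sigma(n)-3$. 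So the whole problem reduces to showing that $T$ is a subset sum of the divisors greater than $1$.

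First I would record the structural consequences of the hypothesis $a_{i+1}<2a_i$. It forces $a_1=2$ (so $n$ is even and is not a power of $2$) and then $a_2=3$. More importantly, I claim it implies the classical completeness inequality $a_{i+1}\le 1+(a_1+\dots+a_i)$ for every $i$. Writing $P_i=a_1+\dots+a_i$, one proves by a short induction that $a_i\le P_{i-1}$ for $i\ge 3$: indeed $a_{i+1}\le 2a_i-1=a_i+(a_i-1)\le a_i+P_{i-1}-1=P_i-1$, with base case $a_3<2a_2=6$ giving $a_3\le P_2$. Consequently the full list $1<a_1<\dots<a_k$ (now including $1$) satisfies $d_{j+1}\le 1+\sum_{i\le j}d_i$, so by the standard characterisation of complete sequences every integer in $[0,\sigma(n)]$ is a sum of distinct divisors of $n$.

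The decisive step is to upgrade this to a representation of $T$ that does not use the divisor $1$. I would prove by induction on $k$ that the set of subset sums of $\{a_1,\dots,a_k\}$ is exactly $\{0\}\cup[2,\,\sigma(n)-3]\cup\{\sigma(n)-1\}$, i.e.\ the full interval $[0,\sigma(n)-1]$ with only the two forced gaps at $1$ and $\sigma(n)-2$ that the absence of the summand $1$ creates. The base cases $\{2\}$ and $\{2,3\}$ are immediate. For the inductive step one adds $a_{i+1}$ to a set whose subset sums already form $[0,P_i]\setminus\{1,P_i-1\}$: since the strict inequality gives $a_{i+1}\le P_i-1$, the translate $[a_{i+1},P_{i+1}]\setminus\{a_{i+1}+1,\,P_{i+1}-1\}$ overlaps the previous interval enough to fill the hole at $P_i-1$ and to be filled at $a_{i+1}+1$, leaving only the two new end-gaps. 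Granting this, $T\in[2,\sigma(n)-3]$ is attained without using $1$, which produces the desired two-layered partition.

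The main obstacle I anticipate is precisely this last induction at the low-order end, where the hypothesis is tightest. Keeping the attainable set a single interval apart from the two end-gaps requires not merely $a_{i+1}\le P_i-1$ but also the non-degeneracy $a_{i+1}\ne P_i-2$ at each stage, since otherwise a third gap could open inside the middle range where $T$ lives; verifying this is where the \emph{strict} inequality $a_{i+1}<2a_i$ (as opposed to $a_{i+1}\le 2a_i$, which permits the pure powers of $2$ and their parity obstruction) and the oddness of $\sigma(n)$ (which pins $T$ to the symmetric centre $\tfrac{\sigma(n)-1}{2}$, safely away from the end-gaps) must be combined, together with the fact that the $a_i$ are the actual divisors of $n$ and hence closed under the arithmetic that refills any transient gap. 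I would also stress that a naive greedy choice of the largest admissible divisor does not suffice---for instance, for $n=72$ it stalls with a remainder of $1$ even though $T$ is attainable by other means---so the argument must genuinely control the entire attainable interval rather than follow one greedy path.
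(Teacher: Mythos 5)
Your reduction through Theorem~\ref{twolayprop} and the completeness bound $a_{i+1}\le 1+(a_1+\cdots+a_i)$ are fine, but the decisive step of your plan --- the induction asserting that for every $i$ the subset sums of $\{a_1,\dots,a_i\}$ are exactly $[0,P_i]$ minus the two end-gaps $1$ and $P_i-1$ --- is never proved, and you concede as much. Propagating that invariant requires $a_{i+1}\neq P_i-2$: if equality held, the hole of the translated block at $a_{i+1}+1$ would land exactly on the old hole at $P_i-1$, and an interior gap would survive into the middle range where $T=\frac{\sigma(n)-1}{2}$ lives. Your closing paragraph only lists the ingredients that ``must be combined'' to exclude this; naming the obstacle is not overcoming it. Nor is the exclusion routine: from $a_{i+1}<2a_i$ alone one gets $a_{i-j}>a_i/2^{j}$ and hence only $P_i>(2-2^{1-i})a_i$, which is too weak to separate $P_i-2$ from $a_{i+1}\le 2a_i-1$, so ruling out the degenerate case genuinely needs the divisor structure, and that argument is absent. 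The alternative you gesture at --- a transient interior gap opening and being refilled later --- is no rescue either, because then your inductive hypothesis is false at the intermediate stage and the induction as formulated collapses; you would need a weaker invariant, which you do not state. (The unproved assertions $a_1=2$, $a_2=3$ at the outset are a smaller instance of the same pattern.)

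For contrast, the paper's own proof takes a route that bypasses all of this bookkeeping: it assigns signs from the top down, setting $b_k=a_k=n$ and giving each subsequent $b_{i-1}=\pm a_{i-1}$ the sign opposite to the running sum $s_i=\sum_{j=i}^{k}b_j$; the hypothesis $a_{i+1}<2a_i$ yields $|s_i|<a_i$ by a one-line induction, so $|s_1|\le a_1-1\le 1$, and since $\sigma(n)-1$ is even, $s_1$ is even and hence $0$, the positively and negatively signed divisors forming the two classes. Because this balancing argument controls a single running sum rather than the entire attainable set, the low-end degeneracy on which your plan stalls never arises; and it is not a greedy subset-sum, so your (correct) observation that greedy stalls at $n=72$ does not touch it. One caveat about the source itself: the paper's proof is transcribed from the Zumkeller setting of \cite{rao} and still writes $|s_1|<a_1=1$ even though $1$ is excluded here and $a_1\ge 2$ (the parity step rescues the case $a_1=2$); moreover, read literally, the strict hypothesis is vacuous for $k\ge 2$, since $i=k-1$ gives $n=a_k<2a_{k-1}=2n/a_1$, forcing $a_1<2$ --- the intended condition is $a_{i+1}\le 2a_i$, and under that reading your derivation of completeness from $a_{i+1}\le 2a_i-1$ fails as well.
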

    
    \begin{proof}
    Let $b_i = a_i$ or $−a_i$ for each $i$. I will explain how to chose the sign of $b_i$ precisely. Then I
    show that $\sum_{i=1}^{k}b_k=0$. Hence, it will imply that $\sigma(n)-1$ can be partitioned into two equal-summed
    subsets.
    
    Let $b_k = a_k = n$ and let $b_{k−1} =−a_{k−1}$. Note that $0 < b_k + b_{k−1} < a_{k−1}$ since $a_k < 2a_{k−1}$. Since the
    current sum $b_k + b_{k−1}$ is positive, we assign the negative sign to $b_{k−2}$. Then $b_{k−2} < b_k +b_{k−1} + b_{k−2} <
    a_{k−1} − a_{k−2} < a_{k−2}$ since $a_{k−1} < 2 a_{k−2}$. If $b_k + b_{k−1} + b_{k−2} \geq 0$, we assign the negative sign to $b_{k−3}$; Otherwise, we assign the positive sign to $b_{k−3}$. Let $s_i$ be $\sum_{j=1}^{k}b_j$. In general, the sign assigned to $b_{i−1}$ is the opposite of the sign of $s_i$ . Let us show inductively that $|s_i | <a_i$ for $1 \leq i \leq k$. It is true for $i =k$. Assume that $|s_{i+1}| < a_{i+1}$. Since the sign of $b_i$ is opposite of the sign of $s_{i+1}$, $|s_i| = ||s_{i+1}| −a_i |$. Note
    that $−a_i < |s_{i+1}| −a_i < a_{i+1} − a_i <a_i$ since $a_{i+1} < 2a_i$ . Therefore $|s_i| < a_i$. So $|s_1| <a_1 = 1$. Since $\sigma(n)-1$
    is even, $s_1$, which is obtained by assigning a positive or negative sign to each of the terms in $\sigma(n)-1$
    is even as well. So $s_1 = 0$. This implies that $\sigma(n)-1$ can be partitioned into two equal-summed subsets. Hence it is two-layered.
    \end{proof}
    
    \begin{proposition} [Proposition 1 in \cite{rao}]
    \label{factorizationsigma}
    Let the prime factorization of $n$ be $\prod_{i=1}^{m}p_i^{k_i}$. Then 
    $$\sigma(n)=\prod_{i=1}^{m}\frac{p_i^{k_i+1}-1}{p_i-1}$$
    and
    $$\frac{\sigma(n)}{n}= \prod_{i=1}^{m} \frac{p_i^{k_i+1}-1}{p_i^{k_i}(p_i-1)} < \prod_{i=1}^{m} \frac{p_i}{p_i-1}$$
    \end{proposition}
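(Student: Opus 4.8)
The plan is to derive both the formula and the inequality from two standard facts about the divisor-sum function: that $\sigma$ is multiplicative on coprime arguments, and that it has a closed form on prime powers. First I would settle the prime-power case. Every positive divisor of $p^k$ is of the form $p^j$ for some $0 \leq j \leq k$, so summing the finite geometric series gives
$$\sigma(p^k) = \sum_{j=0}^{k} p^j = \frac{p^{k+1}-1}{p-1}.$$

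Next I would establish multiplicativity. If $\gcd(a,b)=1$, then each divisor $d$ of $ab$ factors uniquely as $d = d_1 d_2$ with $d_1 \mid a$ and $d_2 \mid b$, and conversely every such product divides $ab$. This bijection between divisors of $ab$ and pairs of divisors lets one expand the product of the two divisor sums,
$$\left(\sum_{d_1 \mid a} d_1\right)\left(\sum_{d_2 \mid b} d_2\right) = \sum_{d \mid ab} d,$$
which is exactly $\sigma(a)\sigma(b) = \sigma(ab)$. Applying this inductively across the distinct prime powers $p_i^{k_i}$ appearing in the factorization of $n$ yields
$$\sigma(n) = \prod_{i=1}^{m} \sigma(p_i^{k_i}) = \prod_{i=1}^{m} \frac{p_i^{k_i+1}-1}{p_i-1},$$
which is the first assertion.

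For the ratio, dividing by $n = \prod_{i=1}^{m} p_i^{k_i}$ distributes across the product and gives the displayed expression for $\sigma(n)/n$. The inequality then follows factor by factor: for each $i$,
$$\frac{p_i^{k_i+1}-1}{p_i^{k_i}(p_i-1)} = \frac{p_i - p_i^{-k_i}}{p_i-1} < \frac{p_i}{p_i-1},$$
since $p_i^{-k_i} > 0$. As all quantities involved are positive, taking the product over $i$ of these strict inequalities preserves strictness and completes the bound.

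I do not anticipate any genuine obstacle, since this is the classical evaluation of $\sigma$; the only points requiring care are the bijective argument underlying multiplicativity and ensuring the per-factor inequality is strict so that the resulting product inequality is strict as well.
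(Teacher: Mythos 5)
Your proof is correct, and it is the classical argument: geometric series on prime powers, multiplicativity of $\sigma$ via the divisor bijection for coprime factors, and a factor-by-factor strict bound. Note, however, that the paper itself offers no proof to compare against --- it imports this statement verbatim as Proposition 1 of the cited Zumkeller-numbers paper of Bhaskara Rao and Peng --- so your write-up simply supplies the standard proof that the source relies on. The only pedantic caveat is that the strict inequality requires $m \geq 1$ (for $n = 1$ both empty products equal $1$), which is implicit in the statement since a prime factorization with factors $p_i^{k_i}$ is assumed.
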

    
    \begin{proposition}
    Let the prime factorization of an odd number $n$ be $p_1^k p_2^k \dots p_m^{k_m}$, where $3 \leq p_1 < p_2 < \dots < p_m$. If $n$ is two-layered, then
    $$ \prod_{i=1}^{m} \frac{p_i}{p_i - 1} > 2 ,$$
    and $m$ is at least $3$. In particular:
    \begin{enumerate}
    \item If $m \leq 6$, then $p_1=3$, $p_2=5$, $7$ or $11.$
    \item If $m \leq 4$, then $p_1=3$, $p_2=5$ or 7.
    \item If $m=3$, then $p_1=3$, $p_2=5$, and $p_3=7$ or $11$ or $13$.
    \end{enumerate}
    \end{proposition}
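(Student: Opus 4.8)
The plan is to translate both the displayed inequality and the finite case analysis into statements about the abundancy quotient $\sigma(n)/n$, and then to exploit that $p \mapsto \frac{p}{p-1}$ is strictly decreasing. First I would prove the main inequality: since $n$ is two-layered, Proposition \ref{sigmaodd}(3) gives $\sigma(n) \geq 2n+1$, hence $\sigma(n)/n > 2$. On the other hand, Proposition \ref{factorizationsigma} supplies the strict upper bound $\sigma(n)/n < \prod_{i=1}^{m} \frac{p_i}{p_i-1}$. Chaining these two yields $\prod_{i=1}^{m} \frac{p_i}{p_i-1} > 2$ immediately.

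For everything that follows I would isolate one maximization principle. Write $f(p) = \frac{p}{p-1} = 1 + \frac{1}{p-1}$; then $f$ is strictly decreasing and $f(p) > 1$ for every prime $p$. Consequently, among all products $\prod_i f(p_i)$ taken over a set of \emph{distinct} odd primes subject to a constraint of the form ``at most $M$ primes, each at least $q$'', the maximum is attained by choosing exactly the $M$ smallest primes that are $\geq q$: filling all available slots can only enlarge the product, and for a fixed number of slots smaller primes give larger factors. As a first application, for $m \leq 2$ the largest possible value is $f(3)f(5) = \tfrac32 \cdot \tfrac54 = \tfrac{15}{8} < 2$; since the true product exceeds $2$, we must have $m \geq 3$.

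The three refined cases I would prove by contraposition using the same device. To force $p_1 = 3$ in (1) and (2): if $p_1 \geq 5$ then every prime is $\geq 5$, and the principle bounds the product by $f(5)f(7)f(11)f(13)f(17)f(19)$ when $m \leq 6$, and by $f(5)f(7)f(11)f(13)$ when $m \leq 4$; a direct computation shows both are $< 2$, contradicting the main inequality. With $p_1 = 3$ fixed, the same bound on $p_2$ follows: $p_2 \geq 13$ (for $m \leq 6$) gives at most $f(3)f(13)f(17)f(19)f(23)f(29) < 2$, and $p_2 \geq 11$ (for $m \leq 4$) gives at most $f(3)f(11)f(13)f(17) < 2$, forcing $p_2 \in \{5,7,11\}$ and $p_2 \in \{5,7\}$ respectively. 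For (3), with $m = 3$, the choice $p_2 = 7$ gives at most $f(3)f(7)f(11) = \tfrac{77}{40} < 2$, so $p_2 = 5$; and then $p_3 \geq 17$ gives at most $f(3)f(5)f(17) = \tfrac{255}{128} < 2$, so $p_3 \in \{7,11,13\}$.

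There is no conceptual obstacle here: the content is the bookkeeping of finitely many extremal products against the threshold $2$. The single point that demands care is to set up each maximization correctly, always filling the maximum permitted number of prime slots with the smallest admissible primes, so that an upper bound strictly below $2$ genuinely contradicts $\prod_{i=1}^{m} \frac{p_i}{p_i-1} > 2$; any looser choice of extremal configuration could fail to close the case.
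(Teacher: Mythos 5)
Your proposal is correct and follows essentially the same route as the paper: chaining $\sigma(n)>2n$ (Proposition \ref{sigmaodd}) with $\sigma(n)/n<\prod_{i}\frac{p_i}{p_i-1}$ (Proposition \ref{factorizationsigma}) to get the main inequality, then ruling out each case by bounding the product with the smallest admissible primes, using exactly the same extremal configurations and numerical checks as the paper. Your only addition is to state explicitly the monotonicity/slot-filling principle that the paper uses implicitly, which is a harmless (indeed clarifying) refinement rather than a different argument.
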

    
    \begin{proof}
    If $n$ is two-layered, then by Propositions \ref{sigmaodd} and \ref{factorizationsigma},
    $$ 2 p_1^{k_1}p_2^{k_2} \dots p_m^{k_m} = 2n < \sigma(n) = \prod_{i=1}^{m}(\sum_{j=0}^{k_i}p_i^j).$$
    Dividing both sides by $p_1^{k_1}p_2^{k_2} \dots p_m^{k_m}$, we get
    $$ 2 < \prod_{i=1}^{m}(\sum_{j=0}^{k_i}p_i^{j-k_i}) < \prod_{i=1}^{m} \frac{p_i}{p_i-1}. $$
    If $m \leq 2$, then
    $$ \prod_{i=1}^{m} \frac{p_i}{p_i-1} \leq \frac{3}{2} \times \frac{5}{4} < 2 $$
    Therefore $m \geq 3$. The parts of $1 - 3$ follows by verifying the condition $\prod_{i=1}^{m} \frac{p_i}{p_i-1} > 2$ directly as given below.
    
    1. Let $m \leq 6$. If $p_1 \neq 3$, then $p_1 \geq 5$ and 
    $$ \prod_{i=1}^{m} \frac{p_i}{p_i-1} \leq \frac{5}{4} \times \frac{7}{6} \times \frac{11}{10} \times \frac{13}{12} \times \frac{17}{16} \times \frac{19}{18} < 2.$$
    Therefore, $p_1 = 3$. If $p_2 > 11$, then $p2 \geq 13$ and
    $$ \prod_{i=1}^{m} \frac{p_i}{p_i-1} \leq \frac{3}{2} \times \frac{13}{12} \times \frac{17}{16} \times \frac{19}{18} \times \frac{23}{22} \times \frac{29}{28} < 2.$$
    Hence, $p_2 \leq 11$. This implies that $p_2 = 5$, $7$ or $11$.
    
    2. Let $m \leq 4$. By $1$, $p_1 = 3$. If $p_2 > 7$, then $p_2 \geq 11$, so
    $$ \prod_{i=1}^{m} \frac{p_i}{p_i-1} \leq \frac{3}{2} \times \frac{11}{10} \times \frac{13}{12} \times \frac{17}{16} < 2.$$
    Therefore, $p_2 \leq 7$. This implies that $p_2 = 5$ or $7$.
    
    3. Let $m = 3$. By $1$, $p_1 = 3$. If $p_2 \neq 5$, then $p_2 \geq 7$ and $p3 \geq 11$. So
    $$ \prod_{i=1}^{3} \frac{p_i}{p_i-1} \leq \frac{3}{2} \times \frac{7}{6} \times \frac{11}{10} < 2.$$
    Hence $p_2 = 5.$
    
    If $p_3 \geq 17$, then
    $$ \prod_{i=1}^{3} \frac{p_i}{p_i-1} \leq \frac{3}{2} \times \frac{5}{4} \times \frac{17}{16} < 2.$$
    Hence, $p_3 < 17$ and consequently $p_3 = 7$, $11$ or $13$.
    \end{proof}

\section{half-layered numbers}
    
    \begin{definition}
    \label{defhalflayered}
    A positive integer $n$ is said to be a half-layered number if the proper positive divisors
    of $n$ excluding $1$ can be partitioned into two disjoint non-empty subsets of equal sum.
    \end{definition}
    
    \begin{definition}
    \label{defhalflayeredpartition}
    A half-layered partition
    for a half-layered number $n$ is a partition $\set {A, B}$ of the set of proper positive divisors of $n$ excluding $1$ so that
    each of $A$ and $B$ sums to the same value.
    \end{definition}
    
    \begin{proposition}
    \label{sigma2n}
    A positive integer $n$ is half-layered if and only if $\frac{\sigma(n)-n-1}{2}$ is the sum of some distinct positive
    proper positive divisors of $n$.
    \end{proposition}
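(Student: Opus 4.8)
The plan is to read off both directions directly from the definition of a half-layered number, in close parallel with the proof of Theorem~\ref{twolayprop}. The key preliminary observation is that the proper positive divisors of $n$ excluding $1$ are exactly the full list of positive divisors of $n$ with $n$ and $1$ deleted, so their total sum is $\sigma(n)-n-1$. Hence in any partition $\set{A,B}$ of this set into two equal-summed blocks, each block must sum to precisely $\frac{\sigma(n)-n-1}{2}$; in particular this already forces $\sigma(n)-n-1$ to be even, so that $\frac{\sigma(n)-n-1}{2}$ is a well-defined positive integer whenever $n$ is half-layered.

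For the forward direction I would take a half-layered partition $\set{A,B}$ and simply note that $A$ is a collection of distinct proper positive divisors of $n$ (excluding $1$) whose sum is $\frac{\sigma(n)-n-1}{2}$, which is exactly the asserted right-hand side. For the converse I would start from a set $S$ of distinct proper divisors of $n$ (excluding $1$) with $\sum_{d\in S} d = \frac{\sigma(n)-n-1}{2}$, set $B=S$, and let $A$ be the complement of $S$ inside the set of proper divisors of $n$ excluding $1$. Then $A$ sums to $(\sigma(n)-n-1)-\frac{\sigma(n)-n-1}{2}=\frac{\sigma(n)-n-1}{2}$, so $\set{A,B}$ is the desired partition.

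The one point that genuinely needs care — and the only real obstacle — is checking that both $A$ and $B$ are non-empty, as the definition of half-layered demands. Here I would invoke that the common block sum $\frac{\sigma(n)-n-1}{2}$ is strictly positive: a block summing to a positive value cannot be empty, and neither can its complement, since the complement sums to the same positive value. The degenerate cases $\sigma(n)-n-1\le 0$, namely $n=1$ or $n$ prime, carry no proper divisors beyond $1$ and are excluded on both sides of the equivalence, so they cause no trouble. Finally, I would make explicit that the representing set $S$ must be drawn from the proper divisors \emph{excluding} $1$, matching the set being partitioned; admitting $1$ into $S$ would shift the complement's sum by $2$ and destroy the symmetry, so this restriction is essential to the argument.
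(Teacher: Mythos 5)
Your proof is correct, and it is essentially the argument the paper intends: the paper actually states Proposition \ref{sigma2n} with no proof at all, and your pass-to-the-complement argument runs exactly parallel to the paper's proof of Theorem \ref{twolayprop}. Two of your points go beyond what the paper records, and both are genuinely needed. First, your insistence that the representing set exclude $1$ silently repairs the statement as printed, which omits ``excluding $1$'': with $1$ permitted the proposition is false, e.g.\ for $n=4$ one has $\frac{\sigma(4)-4-1}{2}=1$, which is itself a proper positive divisor of $4$, yet $4$ is not half-layered because its only proper divisor other than $1$ is $2$. Your parity remark (the complement's sum shifts by $2$ if $1$ is admitted) is the right diagnosis of why the restriction matters. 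Second, your non-emptiness check --- both blocks sum to the strictly positive value $\frac{\sigma(n)-n-1}{2}$, hence neither can be empty --- is demanded by Definition \ref{defhalflayered} and is nowhere addressed in the paper, and your disposal of the degenerate cases $n=1$ and $n$ prime (where $\sigma(n)-n-1\le 0$ and both sides of the equivalence fail) closes the remaining loophole. I see no gaps.
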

    
    \begin{example}
    In Example \ref{example1}, we saw that $36$ was a two-layered number. It is also a half-layered number and its half-layered partition is $\set{A,B}$, where $A = \set{2, 3, 4, 18}$ and $B=\set {6,9,12}$. You can check that each of $A$ and $B$ has the sum of $27$. The numbers $72, 105,$ and $144$ are also half-layered. You can find the sequence of half-layered numbers in \cite{halfsequenc}.
    
    \end{example}

    \begin{theorem}
    \label{halflay}
    A positive even integer $n$ is half-layered if and only if $\frac{\sigma(n)-2n-1}{2}$ is the sum (possibly empty
    sum) of some distinct positive divisors of $n$ excluding $n$, $\frac{n}{2}$, and $1$.
    \end{theorem}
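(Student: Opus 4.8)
The plan is to deduce the statement from Proposition~\ref{sigma2n} by peeling off the divisor $\frac{n}{2}$, which is available precisely because $n$ is even. Throughout I would write $T=\frac{\sigma(n)-n-1}{2}$ for the common value that each block of a half-layered partition must attain, so that Proposition~\ref{sigma2n} reads: $n$ is half-layered if and only if $T$ is a sum of distinct proper divisors of $n$ other than $1$. The arithmetic observation driving the argument is the identity
$$\frac{\sigma(n)-2n-1}{2}=T-\frac{n}{2},$$
combined with the fact that for even $n$ the number $\frac{n}{2}$ is itself a proper divisor of $n$ different from $1$. Since $T$ and $\frac{\sigma(n)-2n-1}{2}$ differ by the integer $\frac{n}{2}$, they are integers simultaneously, so the integrality (parity) obstruction is identical on both sides and needs no separate treatment.

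For the forward implication I would start from a half-layered $n$ and invoke Proposition~\ref{sigma2n} to obtain a set $S$ of distinct proper divisors of $n$ excluding $1$ with $\sum_{d\in S}d=T$. Its complement among the proper divisors excluding $1$ also sums to $T$, because the two together sum to $\sigma(n)-n-1=2T$. The divisor $\frac{n}{2}$ lies in exactly one of these two complementary sets, so after replacing $S$ by its complement if necessary I may assume $\frac{n}{2}\in S$. Then $S\setminus\{\tfrac{n}{2}\}$ is a set of distinct divisors of $n$ excluding $n$, $\frac{n}{2}$, and $1$, and by the displayed identity it sums to $T-\frac{n}{2}=\frac{\sigma(n)-2n-1}{2}$; the empty sum occurs exactly when $S=\{\tfrac{n}{2}\}$, which is why the statement must allow it.

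For the converse I would run this backwards: given a set $R$ of distinct divisors of $n$ excluding $n$, $\frac{n}{2}$, and $1$ with $\sum_{d\in R}d=\frac{\sigma(n)-2n-1}{2}$, the set $R\cup\{\tfrac{n}{2}\}$ consists of distinct proper divisors of $n$ excluding $1$ (the union is disjoint since $\frac{n}{2}\notin R$) and sums to $\frac{\sigma(n)-2n-1}{2}+\frac{n}{2}=T$. Proposition~\ref{sigma2n} then immediately yields that $n$ is half-layered.

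The only step needing genuine care is the swap in the forward direction, namely the claim that one can always arrange $\frac{n}{2}$ to lie in the block whose residual sum we measure. This is exactly what the complementation argument supplies, and it is the place where evenness is essential, since it guarantees that $\frac{n}{2}$ is an honest divisor that can be removed. Beyond this, the proof is just the bookkeeping identity $\frac{\sigma(n)-2n-1}{2}=T-\frac{n}{2}$, so I do not expect any serious obstacle.
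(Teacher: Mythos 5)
Your proof is correct and takes essentially the same route as the paper: both reduce the theorem to Proposition~\ref{sigma2n} and peel off the divisor $\frac{n}{2}$ via the identity $\frac{\sigma(n)-2n-1}{2}=\frac{\sigma(n)-n-1}{2}-\frac{n}{2}$. The only difference is that you justify explicitly, by the complementation argument, the step that $\frac{n}{2}$ may be assumed to lie in the block whose residual sum is measured---a point the paper's two-line proof leaves implicit.
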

    
    \begin{proof}
    An even number $n$ is half-layered if and only if there exists a which is the sum (possibly
    empty sum) of some positive divisors of $n$ excluding $n$, $\frac{n}{2}$, and $1$ such that
    $$\frac{n}{2}+a = \frac{\sigma(n)-n-1}{2}.$$
    Therefore, $a = \frac{\sigma(n)-2n-1}{2}.$
    \end{proof}
    
    \begin{example}
    The number $3^4 \times 2^4$ is a half-layered number, since 
    $$\frac{\sigma(3^4 \times 2^4)- 2 ( 3^4 \times 2^4)-1}{2}=579=432+108+36+3$$
    is a sum of positive divisors of $3^4 \times 2^4$ excluding $3^4 \times 2^4$ , $3^4 \times 2^3$, and $1$. Hence, by Theorem \ref{halflay}, it is a half-layered number.
    \end{example}
    
    \begin{proposition}
    \label{oddhalflayered}
    If $n$ is an odd half-layered number, then at least one of the powers of prime factors of $n$ should be even. 
    \end{proposition}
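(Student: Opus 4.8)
The plan is to run, for the half-layered condition, the parity analysis that underlies Proposition \ref{sigmaodd}, and to deduce from it a constraint on the exponents in the prime factorization $n = p_1^{k_1}\cdots p_m^{k_m}$, where every $p_i$ is odd because $n$ is odd.

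The first and decisive step is to extract the correct parity condition on $\sigma(n)$. A half-layered partition splits the proper divisors of $n$ other than $1$ into two blocks of equal sum, so their total $\sigma(n) - n - 1$ is even; equivalently, by Proposition \ref{sigma2n}, the number $\frac{\sigma(n)-n-1}{2}$ must be an integer realized as a sum of distinct proper divisors. I would push this integrality requirement to pin down the parity of $\sigma(n)$ itself. Everything hinges on getting this parity right.

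Once the parity of $\sigma(n)$ is in hand, the passage to the exponents is the same bookkeeping used in Proposition \ref{sigmaodd}(2): $\sigma$ is multiplicative, and for an odd prime $p$ one has $\sigma(p^k) = 1 + p + \cdots + p^k \equiv k+1 \pmod 2$ since every summand is odd, so the local factor $\sigma(p_i^{k_i})$ is odd exactly when $k_i$ is even. Hence the parity of $\sigma(n) = \prod_{i=1}^m \sigma(p_i^{k_i})$ is dictated entirely by the exponents, and reading the parity forced in the previous step through this product yields the existence of an exponent $k_i$ of even value, as claimed.

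The main obstacle is precisely that first step: correctly determining the parity of $\sigma(n)$ forced by half-layeredness. This is exactly where the half-layered situation departs from the two-layered one, and where a miscount of the two excluded divisors $1$ and $n$ would flip the conclusion. Because the whole proposition stands or falls on this single parity computation, I would first verify the parity of $\sigma(n)$ against a small odd half-layered number, and only then fill in the routine multiplicativity argument of the second step.
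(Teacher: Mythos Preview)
Your approach is exactly the paper's: show that a half-layered partition forces $\sigma(n)-n-1$ to be even, read off the parity of $\sigma(n)$, and then translate via the multiplicative formula $\sigma(n)=\prod_i\sigma(p_i^{k_i})$ with $\sigma(p_i^{k_i})\equiv k_i+1\pmod 2$ for odd $p_i$.

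However, the very danger you flagged actually bites. Since $n$ is odd, $n+1$ is even, and $\sigma(n)=(\sigma(n)-n-1)+(n+1)$ is therefore \emph{even}, not odd. An even product $\prod_i\sigma(p_i^{k_i})$ forces some factor $\sigma(p_i^{k_i})$ to be even, i.e.\ some $k_i+1$ to be even, i.e.\ some $k_i$ to be \emph{odd}. So the argument you outline cannot terminate in ``an exponent $k_i$ of even value''; carried out correctly it yields the opposite conclusion. The sanity check you proposed already detects this: $105=3\cdot5\cdot7$ is half-layered (the proper divisors other than $1$ split as $\{35,5,3\}$ and $\{21,15,7\}$, each summing to $43$), yet every exponent is $1$.

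In short, your method is right and matches the paper, but the statement as written is false; what the parity argument actually proves is that at least one exponent in the prime factorization of an odd half-layered number must be odd. The paper's own proof in fact derives ``$\sigma(n)$ must be even'' and then misstates the consequence in its final line, so both the proposition and its printed proof contain the same slip.
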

    
    \begin{proof}
    If n is odd and half-layered, then $\sigma(n) − n - 1$ must be even and $\sigma(n)$ must be even. Let the prime factorization of $n$ be $\prod_{i=1}^{m}p_i^{k_i}$. Then $\sigma(n) = \prod_{i=1}^{m}(\sum_{j=0}^{k_i}p_i^j)$. If $\sigma(n)$ is odd, then there exists one $k-i$ which is odd.
    \end{proof}

    \begin{definition} [Definition 3 in \cite{rao}]
    A positive integer $n$ is said to be a half-Zumkeller number if the proper positive divisors of $n$ can be partitioned into two disjoint non-empty subsets of an equal sum. A half-Zumkeller partition for a half-Zumkeller number n is a partition $\set{A, B}$ of the set of proper positive divisors of $n$ so that each of $A$ and $B$ sums to the same value.
    \end{definition}
    
    \begin{proposition}
    \label{mnhalflayered}
    If $m$ and $n$ are half-layered numbers with $(m, n)=1$, then $mn$ is half-layered.
    \end{proposition}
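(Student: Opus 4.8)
The plan is to reduce the claim to a subset-sum statement and then build the required subset on the divisor grid of $mn$, imitating the clean argument that coprime products of half-Zumkeller numbers are half-Zumkeller. By Proposition \ref{sigma2n} it suffices to exhibit $\frac{\sigma(mn)-mn-1}{2}$ as a sum of distinct divisors of $mn$ lying strictly between $1$ and $mn$. Since $\gcd(m,n)=1$, every divisor of $mn$ factors uniquely as $ef$ with $e\mid m$ and $f\mid n$, and $\sigma(mn)=\sigma(m)\sigma(n)$; so I would work on the grid $\{ef : e\mid m,\ f\mid n\}$ and try to assemble the target from it.

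First I would record the two half-layered partitions in signed form: a half-layered partition of $m$ is the same as a sign function $\varepsilon$ on the interior divisors $\{e\mid m : 1<e<m\}$ with $\sum_e \varepsilon_e e = 0$, and likewise a sign function $\delta$ on the interior divisors of $n$. Extending $\varepsilon$ and $\delta$ to the divisors $1$ and $m$ (resp. $1$ and $n$) by an arbitrary choice of sign, I would tensor them, assigning the sign $\varepsilon_e\delta_f$ to the cell $ef$. The key computation is that the signed sum over the whole grid, with the two forbidden corners $1=1\cdot 1$ and $mn=m\cdot n$ removed, collapses to $\varepsilon_1\delta_n\, n+\varepsilon_m\delta_1\, m$, because the interior sums of $m$ and of $n$ both vanish. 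In the half-Zumkeller setting the divisor $1$ belongs to the balanced set, the analogous residue is $0$, and one is done; here the half-layered condition excludes $1$, and this is exactly the source of the leftover term.

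The remaining, and main, difficulty is to kill this residue $\pm m\pm n$. I would first check parity: $mn$ can be half-layered only when $\sigma(mn)-mn-1$ is even, and a short congruence count shows this holds precisely when $\pm m\pm n$ can be made even, i.e.\ when $m$ and $n$ are both odd (each $\sigma$ then being even). Choosing the four corner signs so that the residue equals $m-n$, I would cancel it by re-signing finitely many grid cells, each flip altering the signed sum by an even multiple of a divisor of $mn$; producing a suitable collection of cells, using the relative richness of the divisor sets of $m$ and $n$, is where the real work lies. I expect the even/odd mixed case to be the genuinely delicate point of the statement and the residue-cancellation step to be the crux of the argument; should it resist a direct attack, the natural fallback is to prove a bridging lemma expressing half-layeredness of $mn$ through a half-Zumkeller partition of a related number and then to invoke the (clean) multiplicativity of half-Zumkeller numbers.
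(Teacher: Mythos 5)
Your reduction via Proposition \ref{sigma2n}, the encoding of half-layered partitions as sign functions with vanishing interior sums, and the tensor computation showing that excluding the two corners $1$ and $mn$ leaves the residue $\varepsilon_1\delta_n\,n+\varepsilon_m\delta_1\,m$ are all correct, and they isolate exactly what the exclusion of $1$ costs relative to the half-Zumkeller argument. But as it stands the proposal has a genuine gap: the residue-cancellation step, which you yourself identify as the crux, is never carried out, and no argument is given that the divisor grid is rich enough to absorb $\pm m\pm n$ by re-signing cells. What you have is a program, not a proof.

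More importantly, your parity count is not a ``delicate point'' to be overcome --- it is a disproof of the statement in the mixed-parity case, and you should have drawn that conclusion. If $m$ is odd and half-layered then $\sigma(m)$ is even, while if $n$ is even and half-layered then $\sigma(n)$ is odd; hence $\sigma(mn)-mn-1$ is odd and $mn$ cannot be half-layered. Such coprime pairs exist: take $m=105$ with half-layered partition $\{3,5,35\}$, $\{7,15,21\}$, and $n=2^5\cdot 11^2=3872$, whose divisors strictly between $1$ and $n$ sum to $4506$ and split as $\{4,11,16,44,242,1936\}$ against its complement, each side summing to $2253$. So the proposition is false as stated, and no completion of your cancellation step can rescue it outside the case where $m$ and $n$ are both odd (where your residue $m-n$ is at least even). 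The paper's own proof fails at exactly the point you instinctively routed around: it invokes ``half-Zumkeller partitions'' of $m$ and $n$, but a half-layered number is never half-Zumkeller, since $\sigma(n)-n$ would have to be simultaneously even and odd; and even granting such partitions, the displayed sets $(M_1N\setminus\{1\})\cup(mN_1)\cup(nM_1)$ and its counterpart have sums differing by exactly $1$. The honest outcome of your approach is a corrected, restricted statement (both $m$, $n$ odd) together with the residue-cancellation lemma still to be proved, rather than the paper's unrestricted claim.
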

    
    \begin{proof}
    Let $M$ be the set of proper positive divisors of $m$ and let $\set{M_1,M_2}$ be a half-Zumkeller partition
    for $m$. Let $N$be the set of proper positive divisors of $n$ and let $\set{N1, N2}$ be a half-Zumkeller partition
    for $n$. Since $(m,n) = 1$, then the set of proper positive divisors of $mn$ is $(MN) \cup (nM) \cup (mN)$. Observe
    that $\set{(M_1N\setminus \set{1}) \cup (mN_1) \cup (nM_1), (M_2N \setminus \set{1}) \cup (mN_2) \cup (nM_2)}$ is a half-layered partition for $mn$. Therefore $mn$ is
    half-layered.
    \end{proof}
    
    \begin{proposition}
    \label{halfzumnn2}
    Let $n$ be even. Then $n$ is half-layered if and only if $n$ admits a two-layered partition
    such that $n$ and $\frac{n}{2}$ are in distinct subsets. Therefore, if $n$ is an even half-layered number then $n$ is two-layered.
    \end{proposition}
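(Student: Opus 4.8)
The plan is to exploit the fact that the two divisor sets involved differ by exactly the single element $n$, and to absorb the resulting discrepancy in the side-sums using $n/2$, which is a proper divisor greater than $1$ because $n$ is even (and necessarily $n>2$). Recall that a two-layered partition splits the divisors of $n$ excluding $1$ into two parts each of sum $\frac{\sigma(n)-1}{2}$, whereas a half-layered partition splits the proper divisors excluding $1$ into two parts each of sum $\frac{\sigma(n)-n-1}{2}$. Since adjoining $n$ raises the relevant total by $n$ while moving $n/2$ from one part to the other shifts the two side-sums by $\pm n/2$, and since $n-\frac n2=\frac n2$, a single coordinated swap should convert one kind of partition into the other. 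The whole proof is therefore a bookkeeping argument built around this swap, carried out in both directions.

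First I would prove the forward implication. Given a half-layered partition $\{A,B\}$, relabel so that $\frac n2\in A$, and set
$$A'=(A\setminus\{\tfrac n2\})\cup\{n\},\qquad B'=B\cup\{\tfrac n2\}.$$
A direct computation shows that each of $A'$ and $B'$ sums to $\frac{\sigma(n)-n-1}{2}+\frac n2=\frac{\sigma(n)-1}{2}$, and one checks that $\{A',B'\}$ is genuinely a partition of the divisors of $n$ excluding $1$: the union recovers $A\cup B\cup\{n\}$, and disjointness holds because $n\notin B$ and $n\neq\frac n2$. By construction $n\in A'$ and $\frac n2\in B'$ lie in distinct subsets, which is exactly the required two-layered partition; in particular $n$ is two-layered, establishing the final \emph{therefore} assertion.

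For the converse I would simply invert this swap. Starting from a two-layered partition with $n$ and $\frac n2$ in different parts, say $n\in A'$ and $\frac n2\in B'$, I would set
$$A=(A'\setminus\{n\})\cup\{\tfrac n2\},\qquad B=B'\setminus\{\tfrac n2\}.$$
Again each side sums to $\frac{\sigma(n)-1}{2}-\frac n2=\frac{\sigma(n)-n-1}{2}$, and $\{A,B\}$ partitions the proper divisors of $n$ excluding $1$. The one point that requires care---and the step I expect to be the only real obstacle---is verifying that $B$ is non-empty, i.e.\ that $B'\neq\{\frac n2\}$. This is where I would invoke Proposition \ref{sigmaodd}(3): since $B'$ sums to $\frac{\sigma(n)-1}{2}$ and $\sigma(n)\geq 2n+1$ gives $\frac{\sigma(n)-1}{2}\geq n>\frac n2$, the set $B'$ cannot consist of $\frac n2$ alone. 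With non-emptiness secured (and $A$ non-empty since $\frac n2\in A$), $\{A,B\}$ is a bona fide half-layered partition, completing the equivalence.
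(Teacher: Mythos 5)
Your proof is correct and takes essentially the same route as the paper's: both convert between the two kinds of partition by moving $\frac{n}{2}$ across the parts while adjoining or deleting $n$, using $n-\frac{n}{2}=\frac{n}{2}$ to keep the side-sums balanced. The differences are cosmetic---the paper phrases the swap as an identity between side-sums rather than an explicit set exchange, and you additionally verify non-emptiness of the resulting parts via Proposition~\ref{sigmaodd}(3), a point the paper leaves implicit.
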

    
    \begin{proof}
    Let $n$ be even. Let $D$ be the set of all positive divisors of $n$ excluding $1$. The number $n$ is half-layered if
    and only if there exists $A \subset D \setminus \set{n, \frac{n}{2}}$ such that
    $$ \frac{n}{2} + \sum_{a \in A}a = \sum_{b \in D, b \not\in \set{n, \frac{n}{2}} \cup A}b.$$
    That is,
    $$ n + \sum_{a \in A}a = \frac{n}{2} + \sum_{b \in D, b \not\in \set{n, \frac{n}{2}} \cup A}b.$$ 
    
    This is equivalent to saying that $n$ admits a two-layered partition such that $n$ and $\frac{n}{2}$ are in distinct
    subsets.
    
    \end{proof}
    
    \begin{theorem}
    \label{itisalsoproved}
    Let $n$ be an even two-layered number. If $\sigma(n) < 3n$, then $n$ is half-layered.
    \end{theorem}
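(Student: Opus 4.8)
The plan is to reduce the statement directly to the characterisation of even half-layered numbers in Theorem \ref{halflay}. That theorem says $n$ is half-layered precisely when the nonnegative integer
$$a := \frac{\sigma(n)-2n-1}{2}$$
can be written as a (possibly empty) sum of distinct divisors of $n$ other than $n$, $\frac{n}{2}$, and $1$. So my entire task is to produce such a representation of $a$, using only that $n$ is two-layered and that $\sigma(n) < 3n$.

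First I would observe that $a$ is exactly the quantity appearing in the two-layered criterion of Theorem \ref{twolayprop}. Indeed,
$$\frac{\sigma(n)-1}{2}-n = \frac{\sigma(n)-2n-1}{2} = a,$$
and since $\sigma(n)$ is odd by Proposition \ref{sigmaodd}, this is a genuine nonnegative integer. Because $n$ is two-layered, Theorem \ref{twolayprop} guarantees that $a$ is already a sum of distinct proper positive divisors of $n$ excluding $1$; in particular, the divisor $n$ does not appear in this sum.

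Next I would invoke the size hypothesis to rule out $\frac{n}{2}$. From $\sigma(n) < 3n$ I obtain
$$a = \frac{\sigma(n)-2n-1}{2} < \frac{3n-2n-1}{2} = \frac{n-1}{2} < \frac{n}{2}.$$
Every divisor occurring in the representation of $a$ is positive, so a sum containing $\frac{n}{2}$ would already be at least $\frac{n}{2} > a$; hence $\frac{n}{2}$ cannot occur. Thus the representation of $a$ supplied by Theorem \ref{twolayprop} in fact avoids all three of $n$, $\frac{n}{2}$, and $1$, which is exactly the condition demanded by Theorem \ref{halflay}, so $n$ is half-layered.

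The argument is essentially a bookkeeping identification of the two criteria, so there is no deep obstacle; the one point requiring care is the observation that the bound $\sigma(n) < 3n$ forces $a < \frac{n}{2}$, which is precisely what strips the forbidden divisor $\frac{n}{2}$ out of an otherwise unconstrained two-layered representation. The boundary case $\sigma(n) = 2n+1$, giving $a = 0$, causes no trouble, since Theorem \ref{halflay} explicitly permits the empty sum.
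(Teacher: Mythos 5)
Your proof is correct, but it runs through different machinery than the paper's. The paper argues directly at the level of partitions: since $n + \frac{n}{2} = \frac{3n}{2} > \frac{\sigma(n)}{2}$ while each side of a two-layered partition sums to only $\frac{\sigma(n)-1}{2}$, the divisors $n$ and $\frac{n}{2}$ must land in distinct subsets of \emph{any} two-layered partition, and then Proposition \ref{halfzumnn2} (an even $n$ is half-layered iff it admits a two-layered partition separating $n$ and $\frac{n}{2}$) finishes in one line. You instead translate everything into the divisor-sum criteria: Theorem \ref{twolayprop} hands you a representation of $a = \frac{\sigma(n)-2n-1}{2}$ by distinct proper divisors avoiding $1$, the bound $\sigma(n) < 3n$ forces $a < \frac{n}{2}$ so that $\frac{n}{2}$ cannot appear, and Theorem \ref{halflay} converts this into half-layeredness. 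The two arguments encode the same counting fact --- the size hypothesis strips $\frac{n}{2}$ away from the subset containing $n$ --- but yours never touches Proposition \ref{halfzumnn2} and is more careful at the margins (you check nonnegativity of $a$ via Proposition \ref{sigmaodd} and handle the empty-sum case $a=0$ explicitly, which the paper's terse proof leaves implicit). The paper's route has the advantage that the structural statement it isolates (separation of $n$ and $\frac{n}{2}$) is reused immediately in Propositions \ref{sigma3n} and \ref{6dividesn}, whereas your route is more mechanical and self-contained, needing only the two numerical characterizations.
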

    
    \begin{proof}
    Since $n$ and $\frac{n}{2}$ together sum to more than $\frac{\sigma(n)}{2}$ , they must be in different subsets in any
    two-layered partition for $n$. Therefore, by Proposition \ref{halfzumnn2}, $n$ is half-layered.
    \end{proof}
    
    \begin{proposition}
    \label{sigma3n}
    Let $n$ be even. Then, $n$ is two-layered if and only if either $n$ is half-layered or $\frac{\sigma(n)-3n-1}{2}$ is a sum
    (possibly an empty sum) of some positive divisors of $n$ excluding $n$, $\frac{n}{2}$, and $1$.
    \end{proposition}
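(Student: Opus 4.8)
The plan is to exploit the dichotomy, within any two-layered partition $\set{A,B}$ of the positive divisors of $n$ excluding $1$, between the case where $n$ and $\frac{n}{2}$ lie in distinct blocks and the case where they lie in the same block. The first case is precisely what Proposition \ref{halfzumnn2} identifies with half-layeredness, so the entire content of the statement is to show that the second case is governed by the displayed condition on $\frac{\sigma(n)-3n-1}{2}$. Throughout I will write $D$ for the set of positive divisors of $n$ excluding $1$, whose total sum is $\sigma(n)-1$, so that each block of a two-layered partition sums to $\frac{\sigma(n)-1}{2}$.

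First I would settle the forward direction. Assume $n$ is two-layered and fix a two-layered partition $\set{A,B}$. If $n$ and $\frac{n}{2}$ lie in different blocks, Proposition \ref{halfzumnn2} immediately yields that $n$ is half-layered. Otherwise both lie in one block, say $A$. Then the elements of $A \setminus \set{n,\tfrac{n}{2}}$ are distinct divisors of $n$ other than $n$, $\frac{n}{2}$, and $1$, and they sum to $\frac{\sigma(n)-1}{2} - n - \frac{n}{2} = \frac{\sigma(n)-3n-1}{2}$, which is exactly the second alternative (with the empty sum arising when $A=\set{n,\tfrac{n}{2}}$).

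For the converse I would run the same computation backwards. If $n$ is half-layered, Proposition \ref{halfzumnn2} produces a two-layered partition, so $n$ is two-layered. If instead $\frac{\sigma(n)-3n-1}{2} = \sum_{c \in C}c$ for some set $C$ of divisors of $n$ avoiding $n$, $\frac{n}{2}$, and $1$, then I set $A = C \cup \set{n,\tfrac{n}{2}}$; its sum is $\frac{\sigma(n)-3n-1}{2} + \frac{3n}{2} = \frac{\sigma(n)-1}{2}$, which is half of the total $\sigma(n)-1$, so $\set{A, D \setminus A}$ is a two-layered partition and $n$ is two-layered.

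The argument is essentially bookkeeping once the case split is in place, so I expect no deep obstacle; the only points that demand care are verifying that $\frac{\sigma(n)-3n-1}{2}$ is a nonnegative integer whenever it is invoked — for $n$ two-layered, $\sigma(n)$ is odd by Proposition \ref{sigmaodd}, so $\sigma(n)-3n-1$ is even, while in the converse the very act of writing it as a sum of positive divisors forces it to be a nonnegative integer — and being scrupulous about the empty-sum convention, so that the degenerate block $A=\set{n,\tfrac{n}{2}}$ is not overlooked.
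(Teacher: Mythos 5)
Your proof is correct and takes essentially the same approach as the paper: the same dichotomy, via Proposition \ref{halfzumnn2}, on whether $n$ and $\frac{n}{2}$ lie in the same block of a two-layered partition, with identical arithmetic giving $\frac{\sigma(n)-3n-1}{2}$. The only cosmetic difference is in the converse, where the paper adds $\frac{n}{2}$ to the given sum and invokes Theorem \ref{twolayprop}, whereas you construct the two-layered partition directly by adjoining $n$ and $\frac{n}{2}$ to the set $C$ --- which amounts to inlining the proof of that theorem.
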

    
    \begin{proof}
    Let $n$ be even. If $n$ is two-layered but not half-layered, then by Proposition \ref{halfzumnn2}, any two-layered
    partition of the positive divisors of $n$ must have $n$ and $\frac{n}{2}$ in the same subsets. In other words, there
    exists a which is a sum (possibly an empty sum) of some positive divisors of $n$ excluding $n$, $\frac{n}{2}$, and $1$ such that
    $$ 2(n + \frac{n}{2}+a) = \sigma(n)-1$$
    So, $a= \frac{\sigma(n)-3n-1}{2}$. Therefore, the number $ \frac{\sigma(n)-3n-1}{2}$ is a sum (possibly an empty sum) of some positive
    divisors of $n$ excluding $n$, $\frac{n}{2}$, and $1$.
    
    If $n$ is half-layered, then $n$ is two-layered by Proposition \ref{halfzumnn2}. If $ \frac{\sigma(n)-3n-1}{2}$ is a sum (possibly an empty
    sum) of some positive divisors of $n$ excluding $n$, $\frac{n}{2}$, and $1$, then
    $$ \frac{\sigma(n)-2n-1}{2} = \frac{\sigma(n)- 3n -1}{2} + \frac{n}{2}$$
    is a sum of some positive divisors of $n$ excluding $n$, and $1$. By Theorem \ref{twolayprop}, the number $n$ is two-layered.
    \end{proof}

    \begin{proposition}
    \label{6dividesn}
    If $6$ divides $n$, $n$ is two-layered, and $\sigma(n) < \frac{10n}{3}$ , then $n$ is half-layered.
    \end{proposition}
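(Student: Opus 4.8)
The plan is to combine Proposition~\ref{sigma3n} with Theorem~\ref{halflay}, exploiting the hypothesis $6\mid n$ to manufacture the two extra divisors $\frac n3$ and $\frac n6$ that bridge the two criteria. Since $n$ is even and two-layered, Proposition~\ref{sigma3n} gives a dichotomy: \emph{either} $n$ is already half-layered, in which case there is nothing to prove, \emph{or} the number $a:=\frac{\sigma(n)-3n-1}{2}$ is a sum (possibly empty) of distinct positive divisors of $n$ excluding $n$, $\frac n2$, and $1$. I would assume the second alternative and show that, under the extra bound $\sigma(n)<\frac{10n}{3}$, it actually forces $n$ to be half-layered as well.

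In that case $a\geq 0$, being a sum of positive divisors, and the hypothesis yields the crucial size estimate
\[
a=\frac{\sigma(n)-3n-1}{2}<\frac{\frac{10n}{3}-3n-1}{2}=\frac n6-\frac12<\frac n6 .
\]
Consequently every divisor appearing in the given representation of $a$ is at most $a$, hence strictly smaller than $\frac n6$; in particular neither $\frac n3$ nor $\frac n6$ occurs among them. Because $6\mid n$, both $\frac n3$ and $\frac n6$ are genuine divisors of $n$, each different from $n$, $\frac n2$, and $1$ (as $n>6$), and different from every divisor already used. Adjoining them to the representation of $a$ therefore produces a sum of \emph{distinct} divisors of $n$, still avoiding $n$, $\frac n2$, and $1$, of value
\[
a+\frac n3+\frac n6=a+\frac n2=\frac{\sigma(n)-2n-1}{2}.
\]
By Theorem~\ref{halflay} this is precisely a certificate that $n$ is half-layered, which completes the argument.

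The whole substance lies in the size estimate: the constant $\frac{10n}{3}$ is calibrated exactly so that $a<\frac n6$, and this is what guarantees the ``padding'' divisors $\frac n3$ and $\frac n6$ were not already spent in representing $a$. The one point to check with care is therefore the disjointness — that $\frac n3$ and $\frac n6$ are distinct from each other and from all the divisors used for $a$ — together with the routine identity $\frac n3+\frac n6=\frac n2$ that upgrades the Proposition~\ref{sigma3n} representation into the Theorem~\ref{halflay} representation. I do not expect any further case analysis beyond the initial dichotomy to be needed.
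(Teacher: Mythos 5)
Your proposal is correct and follows essentially the same route as the paper's proof: invoke Proposition~\ref{sigma3n}, use the identity $\frac{\sigma(n)-2n-1}{2}=\frac{\sigma(n)-3n-1}{2}+\frac n3+\frac n6$ together with the bound $\frac{\sigma(n)-3n-1}{2}<\frac n6$ forced by $\sigma(n)<\frac{10n}{3}$, and conclude via the half-layered criterion. If anything, yours is slightly more careful — you make the distinctness of $\frac n3$ and $\frac n6$ from the divisors representing $a$ explicit, and you correctly cite Theorem~\ref{halflay} where the paper's text mistakenly points to Proposition~\ref{sigma2n}.
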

    
    \begin{proof}
    If $n$ is not half-layered, by Proposition \ref{sigma3n}, $\frac{\sigma(n)-3n-1}{2}$ is a sum (might be an empty sum)
    of some positive divisors of $n$ excluding $n$, $\frac{n}{2}$, and $1$. Then,
    $$ \frac{\sigma(n)-2n-1}{2} = \frac{\sigma(n)- 3n -1}{2} + \frac{n}{3} + \frac{n}{6}.$$
    Since $\sigma(n)/n < \frac{10}{3}$ we have that $\frac{\sigma(n)-3n-1}{2} < \frac{n}{6}$. Hence $\frac{\sigma(n)-2n-1}{2}$ is a sum of some positive
    divisors of $n$ excluding $n$, $\frac{n}{2}$, and $1$. By Proposition \ref{sigma2n}, $n$ is half layered. This is a contradiction.
    \end{proof}
    
    \begin{proposition}
    \label{n2ntwolayered}
    If $n$ is two-layered, then $2n$ is half-layered.
    \end{proposition}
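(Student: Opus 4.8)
The plan is to reduce the claim to a statement about two-layered partitions of $2n$ and then transport the two-layered partition of $n$. Since $2n$ is even, Proposition \ref{halfzumnn2} says that $2n$ is half-layered if and only if $2n$ admits a two-layered partition in which $2n$ and $n=\frac{2n}{2}$ lie in different blocks. So it suffices to produce a two-layered partition $\set{P,Q}$ of the divisors of $2n$ other than $1$ with $n\in P$ and $2n\in Q$. First I record the necessary parity: writing $n=2^{a}t$ with $t$ odd, Proposition \ref{sigmaodd} gives that $\sigma(n)=(2^{a+1}-1)\sigma(t)$ is odd, hence $\sigma(t)$ is odd and $\sigma(2n)=(2^{a+2}-1)\sigma(t)$ is odd as well, so the obstruction of Proposition \ref{sigmaodd}(1) does not arise.

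Fix a two-layered partition $\set{A,B}$ of the divisors of $n$ other than $1$, normalized so that $n\in A$; then $\sum_{d\in A}d=\sum_{d\in B}d=S$, where $S=\frac{\sigma(n)-1}{2}$. Every divisor $d>1$ of $2n$ is either a divisor of $n$ or of the form $2d'$ with $d'\mid n$, so the divisors of $2n$ greater than $1$ are the elements of $A\cup B$, their doubles $2A\cup 2B$, and the extra element $2=2\cdot 1$. The natural candidate is $P=A\cup(2B)$ and $Q=B\cup(2A)\cup\{2\}$: since $n\in A\subseteq P$ and $2n\in 2A\subseteq Q$, this automatically separates $n$ from $2n$. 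For $n$ odd one computes $\sum P=3S$ and $\sum Q=3S+2$, while the common target for a two-layered partition of $2n$ is $\frac{\sigma(2n)-1}{2}=3S+1$; thus the two blocks miss the target by exactly $\mp 1$.

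The main obstacle is precisely this unit imbalance. It comes from the divisor $2=2\cdot 1$, which has no odd partner because $1$ is excluded from $A$ and $B$; and because every other divisor of $2n$ is matched with its double, one cannot repair a discrepancy of $1$ by relocating an even divisor. Resolving it requires a parity–aware correction: I would look for two divisors of $2n$ differing by $1$ that straddle the partition and swap them, shifting one block sum by exactly $1$. Guaranteeing such a pair in general is the delicate point, and here I would use that a two-layered number is abundant (Proposition \ref{sigmaodd}(3)) and hence divisor–rich: concretely, $2n$ always has several small divisors ($2$, and $3$ when $3\mid n$, and so on), and by choosing the partition $\set{A,B}$ adaptively one can arrange that a consecutive pair of divisors lands on opposite sides. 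An alternative that sidesteps the ad hoc swap is to build the partition of $2n$ directly by the greedy sign-assignment of Proposition \ref{orderdivisors}, ordering the divisors and forcing $n$ and $2n$ onto opposite sides; the evenness of $\sigma(2n)-1$ then pins the final signed sum to $0$.

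Finally I would split the verification by the parity of $n$, since the divisor lattice of $2n$ differs in the two cases: for $n$ odd the divisors of $2n$ are exactly $\{d,2d:d\mid n\}$ (and Theorem \ref{somesum} already shows such $2n$ is two-layered), whereas for $n=2^{a}t$ even the genuinely new divisors are the top–power ones $2^{a+1}d$ with $d\mid t$, so the bookkeeping of which doubles are available must be redone. In each case the construction together with the single $\pm1$ correction yields a two-layered partition of $2n$ separating $n$ and $2n$, and Proposition \ref{halfzumnn2} then gives that $2n$ is half-layered. I expect the guarantee that the corrective pair always exists, rather than the sum bookkeeping, to be where the real content of the proof lies.
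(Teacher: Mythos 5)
There is a genuine gap, and you locate it yourself: the whole argument hinges on the unproved claim that one can always find a ``corrective pair'' of divisors of $2n$ differing by $1$ that straddles the partition, or, failing that, on running the greedy sign assignment of Proposition \ref{orderdivisors} on the divisors of $2n$. Neither fallback is salvageable as stated. A consecutive pair of divisors on opposite sides need not exist, and ``choosing the partition $\set{A,B}$ adaptively'' so that one does is exactly the content a proof would have to supply. The greedy route is blocked outright: the largest proper divisor of $2n$ is $n$ itself, so the top ratio in the divisor chain of $2n$ is exactly $2$ and the hypothesis $a_{i+1}<2a_i$ of Proposition \ref{orderdivisors} can never hold there; nor does any relaxed chain condition follow from $n$ being two-layered. (Also, Theorem \ref{somesum} does not ``already show'' that $2n$ is two-layered for odd $n$; it only records equivalent conditions that would still have to be verified.) So the proposal is a plan whose decisive lemma is missing.

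The idea you are missing is the paper's: do not assign the doubled divisors statically --- that static choice $P=A\cup(2B)$, $Q=B\cup(2A)\cup\set{2}$ is precisely what manufactures the $\pm 1$ deficit --- but instead perform a balance-preserving local move for each genuinely new divisor. Write $n=2^kL$ with $L$ odd and let $\set{D_1,D_2}$ be a two-layered partition of $n$. The divisors of $2n$ not dividing $n$ are exactly the numbers $2^{k+1}\ell$ with $\ell\mid L$, and when $n$ is even ($k\geq 1$) each such divisor other than $2n$ has a partner $2^k\ell\geq 2$ already sitting in $D_1$ or $D_2$. Move that partner to the opposite block and insert $2^{k+1}\ell=2\cdot 2^k\ell$ into the block the partner vacated: both block sums grow by exactly $2^k\ell$, so equality is preserved at every step, distinct values of $\ell$ use distinct partners and do not interfere, and omitting $2n$ itself leaves an equal-sum partition of the proper divisors of $2n$ excluding $1$ --- a half-layered partition outright, with no detour through Proposition \ref{halfzumnn2} and no $\pm 1$ repair. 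Your instinct that the divisor $2$ is the sticking point is sound, though: for odd $n$ ($k=0$) the new divisor $2$ has partner $1$, which is excluded from $D_1\cup D_2$, and the paper's own proof quietly skips this case as well; but for even $n$ the swap construction closes the argument completely, which your static assignment cannot.
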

    
    \begin{proof}
    Let $n = 2^k L$ with $k$ a nonnegative integer and $L$ an odd number, be a two-layered number. Then
    all positive divisors of $n$ excluding $1$ can be partitioned into two disjoint equal-summed subsets $D_1$ and $D_2$.
    Observe that every positive divisor of $2n$ which is not a positive divisor of $n$ can be written as $2^{k+1} \ell$
    where $\ell$ is a positive divisor of $L$. Observe that $2^k \ell$ is either in $D_1$ or $D_2$. Without loss of generality,
    assume that $2^k \ell$ is in $D_1$. In this case, we move $2^k \ell$ to $D_2$ and add $2^{k+1} \ell$ to $D_1$. Perform this procedure
    to all positive divisors of $2n$ which are not positive divisors of $n$ except $2n$ itself. This procedure will
    yield an equal-summed partition of all positive divisors of $2n$ except $2n$ itself. This shows that $2n$ is
    half-Zumkeller.
    \end{proof}
    
    \begin{corollary}
    Let $n$ be even and the prime factorization of $n$ be $2^k p_1^{k_1} \dots p_m^{k_m}$. If $n$ is two-layered but not half-
    layered, then $2^i p_1^{k_1} \dots p_m^{k_m}$ is not two-layered for any $i \leq k-1$, and $2^i p_1^{k_1} \dots p_m^{k_m}$ is half-layered for any
    $i \geq k + 1$. 
    \end{corollary}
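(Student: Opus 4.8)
Write $L = p_1^{k_1}\cdots p_m^{k_m}$ for the odd part of $n$ and set $M_i = 2^i L$, so that $n = M_k$ and every $M_i$ with $i \ge 1$ is even. The engine of the whole argument is a single \emph{doubling step}: if $N$ is two-layered, then $2N$ is half-layered by Proposition \ref{n2ntwolayered}, and since $2N$ is even, Proposition \ref{halfzumnn2} converts this back into the statement that $2N$ is two-layered. Thus I would record the implication
$$N \text{ two-layered} \implies 2N \text{ half-layered and } 2N \text{ two-layered},$$
and the plan is to feed $n$ into this implication repeatedly for the upward claim, and to run it toward a contradiction for the downward claim.

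For the case $i \ge k+1$, I would argue by induction on $j \ge 1$ that $M_{k+j}$ is both half-layered and two-layered. The base case $j=1$ is exactly the doubling step applied to $N = n = M_k$, giving that $M_{k+1} = 2n$ is half-layered and, being even, two-layered. For the inductive step, apply the doubling step to $N = M_{k+j}$, which is two-layered by the inductive hypothesis, to conclude that $M_{k+j+1} = 2M_{k+j}$ is half-layered and two-layered. Reading off the half-layered half of the conclusion then gives that $M_i = 2^i L$ is half-layered for every $i \ge k+1$.

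For the case $i \le k-1$, I would argue by contradiction. Suppose $M_i$ were two-layered for some nonnegative $i \le k-1$. Running the same doubling step starting from $N = M_i$ yields that $M_{i+1}, M_{i+2}, \dots, M_k$ are each half-layered and two-layered; in particular $M_k = n$ would be half-layered. This contradicts the hypothesis that $n$ is not half-layered, so no such $i$ can make $M_i$ two-layered. Note that each intermediate step is legitimate because all the $M_{i+j}$ with $j \ge 1$ are even, which is what licenses the use of Proposition \ref{halfzumnn2} at every stage.

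The substance of the argument is concentrated entirely in the doubling step, and the one point that genuinely needs care is its derivation: multiplying by $2$ naturally produces a \emph{half-layered} number via Proposition \ref{n2ntwolayered}, not a two-layered one, so the iteration can proceed only because evenness lets Proposition \ref{halfzumnn2} convert half-layered back into two-layered at each stage. I expect this interplay, rather than any computation, to be the main obstacle to state cleanly. The hypothesis ``two-layered but not half-layered'' is then precisely what pins $k$ down as the threshold exponent: the chain cannot have begun at any smaller power of $2$ without forcing $n$ itself to be half-layered, while once past $k$ every further doubling remains half-layered.
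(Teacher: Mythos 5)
Your proposal is correct and is essentially the paper's intended argument: the corollary appears immediately after Proposition \ref{n2ntwolayered} with no written proof precisely because it follows by iterating that proposition together with the implication of Proposition \ref{halfzumnn2} that an even half-layered number is two-layered --- exactly your doubling step. Both your upward induction for $i \geq k+1$ and your downward contradiction for $i \leq k-1$ (where the contradiction is correctly with ``$n$ not half-layered,'' not with ``$n$ two-layered'') are sound as stated.
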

    
    \begin{proposition}
    \label{andprimefactorization}
    Let $n$ be an even half-layered number and $p$ be a prime with (n, p) = 1. Then $np^{\ell}$ is half-
    layered for any positive integer $\ell$.
    \end{proposition}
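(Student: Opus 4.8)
The plan is to reduce the assertion to a statement about two-layered partitions via Proposition~\ref{halfzumnn2} and then build the needed partition of $np^{\ell}$ by scaling a partition of $n$ through the powers $1,p,\dots,p^{\ell}$, in the spirit of Theorem~\ref{npalpha}. Since $n$ is even and $(n,p)=1$, the prime $p$ is odd and $np^{\ell}$ is again even, so by Proposition~\ref{halfzumnn2} it suffices to exhibit a two-layered partition of the divisors of $np^{\ell}$ (excluding $1$) in which $np^{\ell}$ and $\tfrac{np^{\ell}}{2}=p^{\ell}\cdot\tfrac{n}{2}$ lie in different blocks. Because $n$ is even and half-layered, that same proposition already furnishes a two-layered partition $\set{A,B}$ of the divisors of $n$ (excluding $1$) with $n\in A$ and $\tfrac{n}{2}\in B$ and with equal block-sums $\tfrac{\sigma(n)-1}{2}$; moreover $\sigma(n)$ is odd by Proposition~\ref{sigmaodd}. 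This partition is the data I would feed into the construction.

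Let $D$ denote the full set of divisors of $n$ (including $1$), so that the divisors of $np^{\ell}$ excluding $1$ are exactly $\bigl(\bigcup_{j=0}^{\ell}p^{j}D\bigr)\setminus\set{1}$. The natural candidate places the scaled copies $p^{j}A$ on one side and the copies $p^{j}B$ on the other, for $0\le j\le\ell$. Scaling by $p^{j}$ multiplies the sum of $A$ and the sum of $B$ by the same factor, so these two sides remain balanced layer by layer; and the top divisor $np^{\ell}=p^{\ell}n$ then sits on the $A$-side while $p^{\ell}\cdot\tfrac{n}{2}$ sits on the $B$-side, which is precisely the separation Proposition~\ref{halfzumnn2} demands. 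Two families of divisors remain unplaced, however: the single excluded $1$ lives only in the bottom layer, and the pure prime powers $p,p^{2},\dots,p^{\ell}$ are not covered at all, since $A\cup B$ omits $1$ and hence $\bigcup_{j}p^{j}(A\cup B)$ omits each $p^{j}$.

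Adjoining $p,\dots,p^{\ell}$ and rebalancing is where I expect the real difficulty to lie. Their total is $p+\dots+p^{\ell}=\sigma(p^{\ell})-1$, and distributing them so as to preserve equality would first require splitting them into two equal halves, which is already impossible by parity unless $\sigma(p^{\ell})=1+p+\dots+p^{\ell}$ is odd — that is (as $p$ is odd) unless $\ell$ is even. This parity of $\sigma(np^{\ell})=\sigma(n)\,\bigl(1+p+\dots+p^{\ell}\bigr)$ is the governing obstruction. Even when it is met, a plain split of the pure powers need not exist (for instance $\set{3,9}$ cannot be halved), so one is forced to move divisors between the two scaled blocks simultaneously. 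Proving that such a simultaneous rebalancing is always available — most plausibly by exploiting the semi-practical flexibility of the divisor set through Theorem~\ref{spracsigma}, or by first running $np^{\ell}$ through Theorem~\ref{npalpha} to secure two-layeredness and then converting to half-layered via the separation criterion of Proposition~\ref{halfzumnn2} — is the decisive and most delicate step, and it is the parity of $\sigma(np^{\ell})$ that controls when the construction can close up.
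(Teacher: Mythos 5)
Your proposal stops short of a proof---your last paragraph openly defers the ``decisive and most delicate step''---but the obstruction you isolated is not a difficulty to be engineered around: it actually refutes the proposition as stated. Since $n$ is even and half-layered, it is two-layered by Proposition~\ref{halfzumnn2}, so $\sigma(n)$ is odd by Proposition~\ref{sigmaodd}; and $p$ is odd since $(n,p)=1$. Hence $\sigma(np^{\ell})=\sigma(n)\bigl(1+p+\cdots+p^{\ell}\bigr)$ is even whenever $\ell$ is odd, whereas an even half-layered number $m$ needs $\sigma(m)-m-1$ even, i.e.\ $\sigma(m)$ odd. Concretely: $36$ is even and half-layered, $(36,5)=1$, yet $180=36\cdot 5$ has $\sigma(180)-180-1=546-181=365$, which is odd, so $180$ is not half-layered (indeed not even two-layered). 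Thus the conclusion fails for every odd $\ell$, and no completion of your construction---or any construction---exists there; the statement can only be salvaged by restricting to even $\ell$, in parallel with the even exponent $\alpha$ in Theorem~\ref{npalpha}.

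It is worth recording how this compares with the paper's own proof, which is essentially your ``natural candidate'' executed without noticing the flaw you caught. The paper groups the divisors of $np^{\ell}$ excluding $1$ into layers $D_0,D_1,\dots,D_{\ell}$ by the exact power of $p$ dividing them and asserts that each $D_i$ splits into two equal-sum halves ``according to'' the separated partition of $D_0$. But for $i\geq 1$ the layer $D_i=p^iD$ (with $D$ the full divisor set of $n$, including $1$) contains the pure power $p^i=p^i\cdot 1$, which the scaled partition does not cover---exactly the unplaced divisors you flagged---and in fact $D_i$ has sum $p^i\sigma(n)$, an odd number, so it admits no equal-sum split whatsoever. (The layer-by-layer argument is transplanted from the half-Zumkeller setting of \cite{rao}, where the partition of $D_0$ includes $1$ and $\sigma(n)$ is even, so every layer genuinely splits; excluding $1$ destroys this.) Your further observation that even in the even-$\ell$ case the pure powers $p,p^2,\dots,p^{\ell}$ cannot be split among themselves (the top power exceeds the sum of the rest) is also correct, so any repaired proof must trade divisors across layers, as you anticipated. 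One caution about your proposed repair route: the paper's proof of Theorem~\ref{npalpha} invokes a ``Zumkeller partition'' of $n$, which a two-layered number cannot possess (its $\sigma$ is odd), so that ingredient itself needs fixing before it can carry the even-$\ell$ case. In short, your attempt is incomplete as a proof, but its diagnosis is correct and strictly sharper than the paper's argument, which is invalid as written.
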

    
    \begin{proof}
    Since $n$ is an even half-layered number, the set of all positive divisors of $n$, excluding $1$, denoted by $D_0$
    can be partitioned into two disjoint subsets $A_0$ and $B_0$ so that the sums of the two subsets are equal
    and $n$ and $\frac{n}{2}$ are in distinct subsets (by Proposition \ref{halfzumnn2}).
    
    Group the positive divisors of $np^{\ell}$ except $1$ into $\ell +1$ groups $D_0, D_1,\dots D_{\ell}$ according to how many positive
    divisors of $p$ they admit, i.e., $D_i$ consists of all positive divisors of $np^{\ell}$ admitting $i$ positive divisors
    of $p$. Then each $D_i$ can be partitioned into two disjoint subsets so that the sums of the two subsets
    are equal and $np^i$ and $\frac{np^i}{2}$ are in distinct subsets according to the two-layered partition of the set $D_0$. Therefore all positive divisors of $np^{\ell}$ excluding $1$ can be partitioned into two disjoint subsets so that the sum
    of these two subsets equal and $np^{\ell}$ and $\frac{np^{\ell}}{2}$ are in distinct subsets. By Proposition \ref{halfzumnn2}, $np^{\ell}$ is half-
    layered.
    \end{proof}
    
    \begin{corollary}
    If $n$ is an even half-layered number and $m$ is a positive integer with $(n,m) = 1$, then $nm$ is
    half-layered.
    \end{corollary}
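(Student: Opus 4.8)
The plan is to reduce the general statement to the prime-power case already established in Proposition \ref{andprimefactorization}, by inducting on the number of distinct prime factors of $m$. Write $m = p_1^{\ell_1} p_2^{\ell_2} \cdots p_r^{\ell_r}$ with the $p_i$ distinct primes. Since $(n,m) = 1$ and $n$ is even, none of the $p_i$ equals $2$, so every $p_i$ is odd and hence $m$ is odd. This observation is what keeps the evenness hypothesis available throughout the induction.

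For the base cases: when $r = 0$ we have $m = 1$, so $nm = n$ is half-layered by hypothesis; when $r = 1$ we have $m = p_1^{\ell_1}$, and $np_1^{\ell_1}$ is half-layered directly by Proposition \ref{andprimefactorization}, using that $n$ is even and $(n,p_1) = 1$.

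For the inductive step, I would set $m' = p_1^{\ell_1} \cdots p_{r-1}^{\ell_{r-1}}$ and $n' = nm'$. Since $m' \mid m$ we still have $(n,m') = 1$, so the induction hypothesis gives that $n'$ is half-layered. The two facts to verify are that $n'$ remains even and that $(n', p_r) = 1$. The first holds because $n$ is even and $m'$ is odd, so $nm'$ is even. The second holds because $p_r$ divides neither $n$ (as $(n,m) = 1$) nor $m'$ (as $p_r$ is distinct from $p_1, \dots, p_{r-1}$), whence $(n', p_r) = (nm', p_r) = 1$. Thus $n'$ is an even half-layered number coprime to $p_r$, and Proposition \ref{andprimefactorization} applies to yield that $n' p_r^{\ell_r} = nm$ is half-layered, completing the induction.

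The argument is essentially bookkeeping, and the only point requiring genuine care is confirming that the partial product $nm'$ stays even at every stage, since this is precisely the hypothesis Proposition \ref{andprimefactorization} demands. This causes no real difficulty: the coprimality of $m$ with the even number $n$ forces $m$, and every partial product of its prime-power factors, to be odd, so multiplying the even number $n$ by any of these preserves evenness. I therefore expect no substantive obstacle beyond checking these coprimality and parity conditions at each step.
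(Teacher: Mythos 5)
Your proof is correct and is exactly the argument the paper intends: the corollary is stated immediately after Proposition \ref{andprimefactorization} with no separate proof, the implicit route being to factor $m$ into prime powers and apply that proposition once per prime, which is precisely your induction. One tiny remark: evenness of $nm'$ follows from evenness of $n$ alone (any multiple of an even number is even), so the observation that $m$ is odd, while true, is not actually needed.
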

    
    \begin{theorem}
    \label{numberandtheprime}
    Let $n$ be an even half-layered number and the prime factorization of $n$ be $ p_1^{k_1} p_2^{k_2} /dots p_m^{k_m} $ Then for nonnegative integers $\ell_1, \dots , \ell_m$, the integer
    $$ p_1^{k_1+\ell_1(k_1+1)} p_2^{k_2+\ell_2(k_2+1)} \dots p_m^{k_m+\ell_m(k_m+1)}    $$
    is half-layered.
    \end{theorem}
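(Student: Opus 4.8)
The plan is to route everything through Proposition \ref{halfzumnn2}. Since $n$ is even, one of its primes is $2$, say $p_1=2$ with $k_1\ge 1$, and the exponent of $2$ in $N:=p_1^{k_1+\ell_1(k_1+1)}\cdots p_m^{k_m+\ell_m(k_m+1)}$ is at least $k_1\ge 1$, so $N$ is again even. Hence it suffices to produce a two-layered partition of the positive divisors of $N$ other than $1$ in which $N$ and $\frac{N}{2}$ lie in different blocks. To organise these divisors I would set $Q_i=p_i^{k_i+1}$ and $\mathcal E=\set{\prod_{i=1}^m Q_i^{t_i}\given 0\le t_i\le\ell_i}$. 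Writing each exponent of $N$ as $(k_i+1)t_i+b_i$ with $0\le b_i\le k_i$ shows that every divisor of $N$ factors uniquely as $e\,d$ with $e\in\mathcal E$ and $d\mid n$; thus the divisor set of $N$ is the disjoint union $\bigsqcup_{e\in\mathcal E}e\cdot\mathrm{Div}(n)$ and the divisor sum factors as $\sigma(N)=\big(\sum_{e\in\mathcal E}e\big)\sigma(n)$.

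The construction then starts from the hypothesis: since $n$ is even and half-layered, Proposition \ref{halfzumnn2} supplies a two-layered partition $\set{A,B}$ of $\mathrm{Div}(n)\setminus\set{1}$ with $n\in A$, $\frac{n}{2}\in B$, each side summing to $\frac{\sigma(n)-1}{2}$. I would process the blocks $e\cdot\mathrm{Div}(n)$ one at a time: for $e=1$ use $\set{A,B}$ directly; for the top block $e_{\max}=\prod_iQ_i^{\ell_i}$ use the scaled partition $\set{e_{\max}A,e_{\max}B}$, which automatically places $N=e_{\max}n$ and $\frac{N}{2}=e_{\max}\cdot\frac{n}{2}$ on opposite sides; and for a general $e\neq 1$ scale $\set{A,B}$ by $e$, noting that this covers $e\cdot(\mathrm{Div}(n)\setminus\set{1})$ with each side summing to $e\cdot\frac{\sigma(n)-1}{2}$ and leaves only the lone divisor $e=e\cdot 1$ to be placed. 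The two global halves then differ only by how these lone divisors $e$ are distributed, and the task reduces to assigning each $e$ to one side so as to balance the total.

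The step I expect to be the real obstacle is exactly this parity bookkeeping, and it is here that the statement as worded must be examined rather than merely computed. A single block $e\cdot\mathrm{Div}(n)$ can be halved with integer sums only when $e\,\sigma(n)$ is even; because $n$ even and half-layered forces $\sigma(n)$ odd, this needs $Q_1=2^{k_1+1}\mid e$, so every block with $t_1=0$ has odd sum and must be paired across blocks, which is possible only if the global total cooperates. Summing over $\mathcal E$ gives $\sum_{e\in\mathcal E}e=\prod_{i=1}^m c_i$ with $c_i=\sum_{t=0}^{\ell_i}Q_i^{t}$, and for an odd prime $p_i$ one has $c_i\equiv \ell_i+1\pmod 2$; hence $\sigma(N)=\sigma(n)\prod_ic_i$ is \emph{even} as soon as some odd prime carries an odd $\ell_i$. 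By Proposition \ref{sigmaodd} no integer with even $\sigma$ is two-layered, and by Proposition \ref{halfzumnn2} no even such integer is half-layered, so for those exponents no partition of the required kind exists and the construction genuinely cannot be completed. The decisive parity step therefore forces each $\ell_i$ attached to an odd prime to be even — precisely the condition already imposed on the $\alpha_i$ in Theorem \ref{secondwaygenerate} — and only under that restriction do all $c_i$ become odd, making $\sigma(N)$ odd; the lone divisors $e$ can then be distributed to balance the two halves (the odd-summed blocks occurring in a pairable configuration), and Proposition \ref{halfzumnn2} delivers that $N$ is half-layered.
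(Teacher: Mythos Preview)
Your diagnosis is correct, and it exposes a genuine error in the paper rather than a gap in your reasoning. The paper's proof asserts that the positive divisors of $p_1^{k_1+\ell_1(k_1+1)}p_2^{k_2}\cdots p_m^{k_m}$ other than $1$ are exactly the groups $D_i=p_1^{i(k_1+1)}D_0$; but since $D_0$ was defined to exclude $1$, these groups omit precisely the elements $p_1^{i(k_1+1)}$ for $1\le i\le\ell_1$, which are your ``lone divisors.'' Your parity computation then shows that the theorem is false as stated: with $n=36=2^2\cdot 3^2$ (even, half-layered by the paper's own Example), $\ell_1=0$, $\ell_2=1$, one obtains $N=2^2\cdot 3^5=972$ with $\sigma(N)=7\cdot 364=2548$ even, so $\sigma(N)-N-1=1575$ is odd and no half-layered partition of $972$ exists. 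The missing hypothesis is that each $\ell_i$ attached to an odd prime be even, exactly parallel to the restriction already present in Theorem~\ref{secondwaygenerate}.

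One caution about the positive direction under the corrected hypothesis: your closing sentence (``the lone divisors $e$ can then be distributed to balance the two halves'') is not yet a proof. After scaling $\{A,B\}$ by each $e\in\mathcal E$ the two global halves are already equal, so the residual task is to split $\mathcal E\setminus\{1\}$ itself into two equal-sum pieces. When only one prime is bumped this set is $\{Q,Q^{2},\dots,Q^{\ell}\}$, which admits no equal-sum bipartition for $\ell=1$, so the naive distribution fails. To finish you need more freedom than merely assigning the lone $e$'s: for example, working one prime at a time and, within a pair of blocks, moving a carefully chosen divisor across (in the spirit of Proposition~\ref{n2ntwolayered}) to absorb the imbalance created by the single extra element. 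Until that mechanism is spelled out, the corrected statement remains unproved in your proposal.
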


    \begin{proof}
    It is sufficient to show that $ p_1^{k_1+\ell_1(k_1+1)} p_2^{k_2} \dots p_m^{k_m} $ is half-layered if $ p_1^{k_1} p_2^{k_2} \dots p_m^{k_m} $ is an
    even half-layered number. Assume that $ p_1^{k_1} p_2^{k_2} \dots p_m^{k_m} $ is even and half-layered, then the set
    of all positive divisors of $n$ excluding $1$, denoted by $D_0$ can be partitioned into two disjoint subsets $A_0$ and $B_0$ so
    that the sums of the two subsets are equal and $n$ and $\frac{n}{2}$ are in distinct subsets (by Proposition \ref{halfzumnn2}).
    Note that the positive divisors of $ p_1^{k_1+\ell_1(k_1+1)} p_2^{k_2} \dots p_m^{k_m} $ excluding $1$ can be partitioned into $\ell_1 + 1$ disjoint groups
    $D_i, 0 \leq i \leq \ell_1$, where elements in $D_i$ are obtained by multiplying $p_1^{i(k_1+1)}$ with elements in $D_0$. Using
    the partition $A_0, B_0$ of $D0$ we can partition every $D_i$ into two disjoint subsets $A_i$ and $B_i$ so that
    the sums of the corresponding subsets are equal and $ np_1^{i(k_1)+1}$ and $\frac{np_1^{i(k_1)+1}}{2}$ are in distinct subsets.
    Therefore, the set of all positive divisors of $ p_1^{k_1+\ell_1(k_1+1)} p_2^{k_2} \dots p_m^{k_m} $ excluding $1$ can be partitioned into two disjoint equal-summed subsets and $ p_1^{k_1+\ell_1(k_1+1)} p_2^{k_2} \dots p_m^{k_m} $ and $\frac{p_1^{k_1+\ell_1(k_1+1)} p_2^{k_2} \dots p_m^{k_m}}{2} $ are in distinct subsets. By
    Proposition \ref{halfzumnn2}, $ p_1^{k_1+\ell_1(k_1+1)} p_2^{k_2} \dots p_m^{k_m} $ is half-layered.
    \end{proof}
    
    \begin{theorem}
    Let $n$ be an even integer and $p$ be a prime with $(n, p) = 1$. Let $D $ be the set of all positive
    divisors of $n$ excluding $1$. Then the following conditions are equivalent:
    \begin{enumerate}
    \item $np$ is half-layered.
    \item There exist two partitions $\set{D_1, D_2}$ and $\set{D_3, D_4}$ of $D$ such that $n$ is in $D_1$, $\frac{n}{2}$ is in $D_2$ and
    $$ p (\sum_{d \in D_1}d - \sum_{d \in D_2} d) =  \sum_{d \in D_3}d - \sum_{d \in D_4}d. $$
    \item There exists a partition $\set{D_1, D_2}$ of $D$ and subsets $A_1 \subseteq D_1$and $A_2 \subseteq D_2$ such that $n$ is in $D_1$, $\frac{n}{2}$ is in $D_2$
    and
    $$ \frac{p+1}{2} (\sum_{d \in D_1}d - \sum_{d \in D_2} d) =  \sum_{d \in A_1}d - \sum_{d \in A_2}d.  $$
    \end{enumerate}
    \end{theorem}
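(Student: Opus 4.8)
The plan is to reduce the half-layered condition to a two-layered one by Proposition \ref{halfzumnn2}, and then to run the very same three-step argument as in the proof of Theorem \ref{somesum}, carrying along the extra bookkeeping that records the positions of $n$ and $\frac{n}{2}$. Throughout, write $\hat D = D \cup \set{1}$ for the full set of positive divisors of $n$, abbreviate $s_i = \sum_{d\in D_i}d$, and note that since $(n,p)=1$ the positive divisors of $np$ excluding $1$ are exactly $D \cup (p\hat D)$, where crucially $np = p\cdot n$ and $\frac{np}{2} = p\cdot \frac{n}{2}$ both lie in the scaled copy $p\hat D$ (here I tacitly take $n>2$, so that $\frac{n}{2}\in D$). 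Because $n$ is even, $np$ is even, so by Proposition \ref{halfzumnn2} the number $np$ is half-layered if and only if it admits a two-layered partition $\set{A,B}$ in which $np$ and $\frac{np}{2}$ lie in distinct blocks; this is the form of condition (1) I will actually use.

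For $(1)\Rightarrow(2)$, take such a two-layered partition $\set{A,B}$ with $np\in A$ and $\frac{np}{2}\in B$. Splitting along $D\cup(p\hat D)$, set $D_1 = \frac{1}{p}(A\cap p\hat D)$, $D_2 = \frac{1}{p}(B\cap p\hat D)$, $D_4 = A\cap D$, and $D_3 = B\cap D$, exactly as in Theorem \ref{somesum}. Then $\set{D_1,D_2}$ and $\set{D_3,D_4}$ partition $D$, and equating the sums of $A$ and $B$ (namely $ps_1+s_4 = ps_2+s_3$) rearranges to $p(s_1-s_2)=s_3-s_4$. The new point is that $np = p\cdot n\in A$ forces $n\in D_1$, while $\frac{np}{2} = p\cdot\frac{n}{2}\in B$ forces $\frac{n}{2}\in D_2$; these are precisely the placement constraints in (2).

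For $(2)\Rightarrow(3)$, put $A_1 = D_1\cap D_3$ and $A_2 = D_2\cap D_4$. Since $\set{D_1,D_2}$ and $\set{D_3,D_4}$ partition the same set $D$, one checks termwise that $(s_1-s_2)+(s_3-s_4) = 2\bigl(\sum_{d\in A_1}d - \sum_{d\in A_2}d\bigr)$, whence, using $p(s_1-s_2)=s_3-s_4$,
\[
\frac{p+1}{2}(s_1-s_2) = \frac{1}{2}\bigl[p(s_1-s_2)+(s_1-s_2)\bigr] = \frac{1}{2}\bigl[(s_3-s_4)+(s_1-s_2)\bigr] = \sum_{d\in A_1}d - \sum_{d\in A_2}d,
\]
and the constraints $n\in D_1$, $\frac{n}{2}\in D_2$ are left untouched. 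For $(3)\Rightarrow(1)$, I reverse the final computation of Theorem \ref{somesum}: rewriting the identity in (3) and multiplying by $2$ yields the two-layered partition $\set{(pD_1)\cup A_2\cup(D_1\setminus A_1),\ (pD_2)\cup A_1\cup(D_2\setminus A_2)}$ of the divisors of $np$ excluding $1$. The decisive observation is that $n\in D_1$ gives $np = p\cdot n\in pD_1$, while $\frac{n}{2}\in D_2$ gives $\frac{np}{2} = p\cdot\frac{n}{2}\in pD_2$; hence $np$ and $\frac{np}{2}$ fall into distinct blocks, and Proposition \ref{halfzumnn2} upgrades this to half-layeredness of $np$.

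I expect the only delicate point to be the bookkeeping of the single divisor $1$ of $n$, equivalently the divisor $p$ of $np$, which lies outside the partitions $\set{D_1,D_2}$ and $\set{D_3,D_4}$ of $D$; this is handled exactly as in the proof of Theorem \ref{somesum}. What makes the present statement work, and what is genuinely new compared with Theorem \ref{somesum}, is that this stray divisor never interacts with $n$ or $\frac{n}{2}$, so it does not disturb the placement constraints $n\in D_1$, $\frac{n}{2}\in D_2$ that encode, via Proposition \ref{halfzumnn2}, the passage between two-layered and half-layered partitions.
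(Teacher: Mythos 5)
The ``delicate point'' you defer at the end is not delicate bookkeeping --- it is a fatal hole, and it is not ``handled exactly as in the proof of Theorem \ref{somesum}'' because the paper's proof of Theorem \ref{somesum} silently drops it too. Concretely: the divisors of $np$ excluding $1$ are $D \cup (p\hat D) = D \cup (pD) \cup \set{p}$. In your step $(3)\Rightarrow(1)$ the proposed two-layered partition $\set{(pD_1)\cup A_2\cup(D_1\setminus A_1),\ (pD_2)\cup A_1\cup(D_2\setminus A_2)}$ has union $D \cup pD$, so the divisor $p = p\cdot 1$ belongs to neither block, and you have not partitioned the divisor set of $np$ at all; placing $p$ into either block shifts that block's sum by the odd number $p$ and destroys the balance. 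Symmetrically, in $(1)\Rightarrow(2)$ your $D_1 = \frac{1}{p}(A\cap p\hat D)$ contains $1$ whenever $p\in A$ (and $p$ must lie in $A$ or $B$), so $\set{D_1,D_2}$ is not a partition of $D$, and deleting the stray $1$ perturbs your identity $ps_1+s_4=ps_2+s_3$ by $\pm p$.

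This gap cannot be repaired, because the equivalence is in fact false. Since $n$ is even and $(n,p)=1$, the prime $p$ is odd, so $\sigma(np)=\sigma(n)(p+1)$ is even; but an even half-layered number is two-layered by Proposition \ref{halfzumnn2}, and a two-layered number has odd $\sigma$ by Proposition \ref{sigmaodd}, so condition $(1)$ can never hold. Conditions $(2)$ and $(3)$, however, hold whenever $n$ itself is even and half-layered: take $n=36$, $p=5$, $D_1=\set{2,3,4,36}$, $D_2=\set{6,9,12,18}$ (each sums to $45$, with $36\in D_1$ and $18\in D_2$), then $D_3=D_1$, $D_4=D_2$ and $A_1=A_2=\emptyset$ make both displayed equations read $0=0$, yet $np=180$ is not half-layered because $\sigma(180)=546$ is even. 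So $(3)\Rightarrow(1)$ and $(2)\Rightarrow(1)$ fail, and the failure is located exactly at the divisor $p$ you set aside. In fairness, your argument faithfully mirrors the paper: its own proof reduces via Proposition \ref{halfzumnn2} and defers to Theorem \ref{somesum}, whose proof omits $p$ in the same two places (there the sets $\frac{1}{p}(A\cap pD)$ may contain $1$, and the final partition again misses $p$); the counterexample above shows the defect lies in the statement, not merely in your write-up, and the same parity obstruction also undermines the paper's Proposition \ref{andprimefactorization} for odd exponents, on which nearby results lean.
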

    
    \begin{proof}
    By Proposition \ref{halfzumnn2}, $np$ is half-layered if and only if there is a two-layered partition $\set{A, B}$ of
    $(pD) \cup D $such that $n \in A$ and  $\frac{n}{2} \in B$. The rest of the proof follows along the lines of the proof of
    Theorem \ref{somesum}.
    \end{proof}
    
    \begin{proposition}
    If $a_1 < a_2 < \dots < a_k = n$ are all positive divisors of an even number $n$ excluding $1$ with $a_{i+1} < 2a_i$
    for all $i$ and $\sigma(n)$ is odd, then $n$ is half-layered.
    \end{proposition}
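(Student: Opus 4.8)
The plan is to reuse the sign-assignment construction from the proof of Proposition \ref{orderdivisors} and to observe that it automatically places $n$ and $\frac{n}{2}$ in opposite classes, after which Proposition \ref{halfzumnn2} finishes the job. So the work is really one structural observation plus a citation of the earlier construction.

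First I would record the elementary but decisive fact that, since $n$ is even, its largest proper divisor is $\frac{n}{2}$. Consequently, in the increasing list $a_1 < a_2 < \dots < a_k = n$ of divisors of $n$ excluding $1$, the two top entries are forced to be $a_{k-1} = \frac{n}{2}$ and $a_k = n$. This identification of $\frac{n}{2}$ with the second-largest divisor is what makes the half-layered refinement available.

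Next I would run the same greedy signing $b_i = \pm a_i$ as in Proposition \ref{orderdivisors}: set $b_k = +a_k$ and $b_{k-1} = -a_{k-1}$, and for each smaller index let the sign of $b_{i-1}$ be the opposite of the sign of the partial sum $s_i = \sum_{j=i}^{k} b_j$. The inductive estimate $|s_i| < a_i$, which uses only the hypothesis $a_{i+1} < 2a_i$, carries over verbatim and gives $|s_1| < a_1 = 2$. Since flipping a sign changes a term by the even amount $2a_i$, the integer $s_1$ has the same parity as $\sum_{i=1}^{k} a_i = \sigma(n) - 1$, which is even because $\sigma(n)$ is odd; hence $s_1 = 0$. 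Thus the positively signed divisors and the negatively signed divisors have equal sum, i.e.\ we obtain a two-layered partition $\set{A,B}$ of the divisors of $n$ excluding $1$.

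Finally — and this is the whole point — the construction fixed $b_k = +a_k$ and $b_{k-1} = -a_{k-1}$, so $n = a_k$ lies in $A$ while $\frac{n}{2} = a_{k-1}$ lies in $B$; the two-layered partition separates $n$ and $\frac{n}{2}$. As $n$ is even, Proposition \ref{halfzumnn2} then yields that $n$ is half-layered. I do not expect a genuine obstacle: the equal-sum property is inherited unchanged from Proposition \ref{orderdivisors}, and the only step deserving a word of care is the parity argument upgrading $|s_1| < 2$ to $s_1 = 0$, which is precisely where evenness of $n$ (forcing $a_1 = 2$) enters.
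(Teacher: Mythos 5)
Your proposal is correct and takes essentially the same route as the paper: rerun the greedy signing from Proposition \ref{orderdivisors}, note that evenness of $n$ forces $a_{k-1}=\frac{n}{2}$ so that $b_k=+n$ and $b_{k-1}=-\frac{n}{2}$ place $n$ and $\frac{n}{2}$ in distinct classes, and conclude by Proposition \ref{halfzumnn2}. Your explicit parity step (evenness of $n$ gives $a_1=2$, so $|s_1|<2$ together with $s_1\equiv\sigma(n)-1\equiv 0 \pmod{2}$ forces $s_1=0$) is in fact slightly more careful than the paper's own write-up of Proposition \ref{orderdivisors}, which misstates the smallest divisor as $a_1=1$.
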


    \begin{proof}
    Note that in the proof of Proposition \ref{orderdivisors}, $b_k = n$ and $b_{k−1} = - \frac{n}{2}$ have different signs. So we
    get a two-layered partition of $n$ such that $n$ and $\frac{n}{2}$ are in distinct subsets. By Proposition \ref{halfzumnn2}, $n$ is
    half-layered.
    \end{proof}

\end{document}